\documentclass{article}






\usepackage[utf8]{inputenc} 
\usepackage[T1]{fontenc}    
\usepackage{hyperref}       
\usepackage{url}            
\usepackage{booktabs}       
\usepackage{amsfonts}       
\usepackage{nicefrac}       
\usepackage{microtype}      

%

\usepackage{graphicx}
\usepackage{amsmath,amssymb,amsfonts}
\usepackage{mathrsfs}
\usepackage{graphics}
\usepackage{lipsum, color}
\usepackage{epstopdf}
\usepackage{amsthm}

\usepackage{hhline}
\usepackage{booktabs}
\usepackage{makecell}
\usepackage{multirow}
\usepackage{dcolumn}  
\usepackage{lineno}  
\usepackage{blindtext}  
\usepackage{verbatim} 
\usepackage{listings} 
\usepackage{color}    
\usepackage{tikz}

\usetikzlibrary{shapes,arrows}
\usetikzlibrary{positioning}
\usetikzlibrary{shapes,arrows}
\usetikzlibrary{arrows,calc, matrix}
\tikzset{
    block/.style = {draw, rectangle, 
        minimum height=1cm, 
        minimum width=1cm},
    input/.style = {coordinate,node distance=1.3cm},
    output/.style = {coordinate,node distance=2.3cm},
    arrow/.style={draw, -latex,node distance=2cm},
    pinstyle/.style = {pin edge={latex-, black,node distance=2cm}},
    sum/.style = {draw, circle, node distance=1cm}
}

\DeclareMathOperator*{\argmin}{arg\,min}

\newcommand{\bR}{\mathbb{R}}
\newcommand{\bE}{\mathbb{E}}


\DeclareMathOperator*{\diag}{diag}
\DeclareMathOperator*{\tr}{tr}
\usepackage{forest}

\newtheorem{theorem}{Theorem}
\newtheorem{definition}{Definition}
\newtheorem{lemma}{Lemma}

\newtheorem{corollary}{Corollary}

\newtheorem*{theorem*}{Theorem}
\usepackage{color}

\begin{document}

\title{\Large \bf Escaping Locally Optimal Decentralized Control Polices via Damping\thanks{Email: han\_feng@berkeley.edu, lavaei@berkeley.edu}}
\author{
  Han Feng and Javad Lavaei 
\thanks{ 
This work was supported by grants from ARO, ONR, AFOSR, and NSF.}}
\maketitle


\begin{abstract}
We study the evolution of locally optimal decentralized controllers with the damping of the control system. Empirically it is shown that even for instances with an exponential number of connected components, damping merges all local solutions to the one global solution. We characterize the evolution of locally optimal solutions with the notion of hemi-continuity and further derive asymptotic properties of the objective function and of the locally optimal controllers as the damping becomes large. Especially, we prove that with enough damping, there is no spurious locally optimal controller with favorable control structures. The convoluted behavior of the locally optimal trajectory is illustrated with numerical examples. 
\end{abstract}

\section{Introduction} 

The optimal decentralized control problem (ODC) adds controller constraints to the classical centralized optimal control problem. This addition breaks down the separation principle and the classical solution formulas culminated in~\cite{Doyle1989}. Although ODC has been proved intractable in general~\cite{Witsenhausen1968, Blondel2000}, the problem has convex formulations under assumptions such as partially nestedness~\cite{Shah2013}, positiveness~\cite{Rantzer2015}, and quadratic invariance~\cite{Lessard2014}. A recently proposed System Level Approach~\cite{Wang2017} convexified the problem in the space of system response matrix. Convex relaxation techniques have been extensively documented in \cite{Boyd1994}, though it is considered challenging to solve large scale optimization problems with linear matrix inequalities. 

The line of research on convexification is in contrast with the success of stochastic gradient descent well-documented in machine learning practice~\cite{hardtTrainFasterGeneralize2015,Goodfellow-et-al-2016}. Admittedly, the problem of generalizability, training speed, and fairness in machine learning departs from the traditional control focus on stability, robustness, and safety. Nevertheless, the interplay of the two has inspired fruitful results. As an example, to solve the linear-quadratic optimal control problem, the traditional nonlinear programming methods include Gauss-Newton, augmented Lagrangian, and Newton's methods~\cite{levineDeterminationOptimalConstant1970,Wenk1980,linAugmentedLagrangianApproach2011,makilaComputationalMethodsParametric1987}. Only in the last few years do researchers started to look at the classical problem with the newly developed optimization techniques and proved the efficiency of policy gradient methods in model-based and model-free optimal control problems~\cite{fazelGlobalConvergencePolicy2018}. This efficiency statement of local search, however, is unlikely to carry over trivially to ODC, due to the NP-hardness of the problem and the recent investigation of the topological properties of ODC in~\cite{fengExponentialNumberConnected}. 
Nevertheless, questions can be answered without contradicting the general complexity statement. For example, it is known that damping of the system reduces the number of connected components of the set of stabilizing decentralized controllers. Does damping reduce the number of locally optimal decentralized controllers? 
This paper attempts an answer with (1) a study of the continuity properties of the trajectories of the locally optimal solutions formed by varying damping, and (2) an asymptotic analysis of the trajectories as the damping becomes large. The observation of our study shall shed light on the properties of local minima in reinforcement learning, whose aim is to design optimal control policies and different local minima have different practical behaviors. 


This work is closely related to continuation methods such as homotopy. They are known to be appealing yet theoretically poorly understood~\cite{mobahiTheoreticalAnalysisOptimization2015}. Homotopy has been used as an initialization strategy in optimal control: in \cite{broussardActiveFlutterControl1983}, the author mentioned the idea of gradually moving from a stable system to the original system to obtain a stabilizing controller. The paper \cite{zigicHomotopyApproachesH21991} considered $H_2$-reduced order problem and proposed several homotopy maps and initialization strategies; in its numerical experiments, initialization with a large multiple of $-I$ was found appealing. \cite{emmanuelg.collinsjr.ComparisonDescentContinuation1998} compared descent and continuation algorithms for $H_2$ optimal reduced-order control problem and concluded that homotopy methods are empirically superior to descent methods. The difficulty of obtaining a convergence theory for general constrained optimal control problem can be appreciated from the examples in \cite{mercadalHomotopyApproachOptimal1991}. Compared with those earlier works, we consider a special kind of continuation, that is, damping,  to improve the locally optimal solutions in optimal decentralized control. Our focus is not so much on following a specific path but on the evolution of several paths and the movement of locally optimal solutions from one path to another. 

The remainder of this paper is organized as follows. 
Notations and problem formulations are given in Section~\ref{sec:formulation}. Continuity and asymptotic properties of our damping strategies are outlined in Section~\ref{sec:continuity} and Section~\ref{sec:asymptotic}, respectively. Numerical experiments are detailed in Section~\ref{sec:numerical}. Concluding remarks are drawn in Section~\ref{sec:conclusion}.

\section{Problem Formulation}\label{sec:formulation}

Consider the linear time-invariant system 
\begin{align*}
    \dot x(t) &= A x(t) + B u(t), 
\end{align*}
where $A\in \bR^{n\times n}$ and $B\in \bR^{n\times m}$ are real matrices of compatible sizes. The vector $x(t)$ is the state of the system with an unknown initialization $x(0)=x_0$, where $x_0$ is modeled as a random variable with zero mean and a positive definite covariance $\bE[x(0)x(0)^\top ] = D_0$. 
The control input $u(t)$ is to be determined via a static state-feedback law $u(t) = Kx(t)$ with the gain $K\in \bR^{m\times n}$ such that some quadratic performance measure is maximized. 
Given a controller $K$, the closed-loop system is \begin{align*}
\dot x(t) &= (A + BK)x(t). 
\end{align*}
A matrix is said to be stable if all its eigenvalues lie in the open left half plane. The controller $K$ is said to stabilize the system if $A+BK$ is stable. 
ODC optimizes over the set of structured stabilizing controllers \begin{align*}
\{K: A+BK \text{ is stable}, K\in \mathcal{S}\},
\end{align*}
where $\mathcal{S}\subseteq \bR^{m\times n}$ is a linear subspace of matrices, often specified by fixing certain entries of the matrix to zero. In that case, the sparsity pattern can be equivalently described with the indicator matrix $I_{\mathcal{S}}$, whose $(i,j)$-entry is defined to be 
\begin{align*}
[I_{\mathcal{S}}]_{ij}= 
\begin{cases} 1, \qquad \text{if $K_{ij}$ is free}\\
0, \qquad \text{if $K_{ij}=0$.}
\end{cases}
\end{align*}   

The structural constraint $K \in \mathcal{S}$ is then equivalent to $K \circ I_\mathcal{S}=K$, where $\circ$ denotes entry-wise multiplication. In the following, we will consider the discounted, or damped cost, which is defined as 
\begin{equation}
\begin{aligned}
    J(K, \alpha) = & \bE \int_0^\infty \left[e^{-2\alpha t}\left(  \hat x^\top (t) Q \hat x(t) + \hat u^\top  (t) R \hat u(t) \right)\right] dt \\ 
  s.t.  \quad   & \hat{\dot {x}}(t) =  A \hat x(t) + B \hat u(t) \\ 
  & \hat u(t) = K \hat x(t). 
\end{aligned}\label{eq:damped-rep}
\end{equation}
where $Q \succeq 0$ is positive semi-definite and $R\succ 0$ is positive definite. The expectation is taken over $x_0$. 
Setting $x(t) = e^{-\alpha t} \hat x(t), u(t) = e^{-\alpha t} \hat u(t)$, the cost $J(K, \alpha)$ can be equivalently written as  
\begin{align}
\begin{aligned}
J(K, \alpha) = & \bE \int_0^\infty \left[ x^\top (t) Q x(t) + u^\top  (t) R u(t) \right] dt \\ 
  s.t.  \quad   & \dot x(t) = (A - \alpha I) x(t) + Bu(t) \\ 
  & u(t) = K x(t), 
  \end{aligned}\label{eq:damped-matrix-rep}
\end{align}

The two equivalent formulations above motivate the notion of ``damping property''. We make a formal statement below. 
\begin{lemma}\label{def:damping}
  The function $J(K, \alpha)$ defined in \eqref{eq:damped-rep} and \eqref{eq:damped-matrix-rep} satisfies the following  ``damping property'': suppose that $K$ stabilizes the system $(A-\alpha I, B)$, then for all $\beta > \alpha$, $K$ stabilizes the system $(A-\beta I, B)$ with $J(K, \beta) < J(K, \alpha)$. 
\end{lemma}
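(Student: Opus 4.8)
The plan is to split the statement into its two parts, stability preservation and strict cost decrease, and to work throughout with the first representation \eqref{eq:damped-rep}, in which the closed-loop trajectory is independent of the damping parameter $\alpha$. This independence is what makes the cost comparison transparent.

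For the stability claim I would simply write the closed-loop matrix of the more heavily damped system as a shift of the given one:
\[
A - \beta I + BK = (A - \alpha I + BK) - (\beta - \alpha)I .
\]
Subtracting $(\beta-\alpha)I$ with $\beta - \alpha > 0$ translates every eigenvalue to the left by $\beta-\alpha$, and $A - \alpha I + BK$ is stable by hypothesis, so every eigenvalue of $A - \beta I + BK$ has strictly smaller, hence still negative, real part. Thus $K$ stabilizes $(A-\beta I, B)$.

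For the cost I would use \eqref{eq:damped-rep}, noting that $\hat x(t) = e^{(A+BK)t}x_0$ and $\hat u(t) = K\hat x(t)$ solve the undamped closed loop and are therefore \emph{independent} of $\alpha$; only the weight $e^{-2\alpha t}$ changes. Setting the running cost $g(t) := \hat x(t)^\top (Q + K^\top R K)\hat x(t) \ge 0$, which is nonnegative since $Q \succeq 0$ and $R \succ 0$, I would write
\[
J(K,\alpha) - J(K,\beta) = \bE \int_0^\infty \bigl(e^{-2\alpha t} - e^{-2\beta t}\bigr)\, g(t)\, dt .
\]
Finiteness of both integrals follows from the stability just established, because $e^{-\alpha t}\hat x(t) = e^{(A - \alpha I + BK)t}x_0$ decays exponentially (and likewise for $\beta$). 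Since $\beta > \alpha$, the kernel $e^{-2\alpha t} - e^{-2\beta t}$ is strictly positive for every $t>0$, which already gives the weak inequality $J(K,\beta) \le J(K,\alpha)$.

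The genuine work is upgrading this to a strict inequality, and I expect it to be the only delicate point. I would pass the expectation through the quadratic form using $\bE[\hat x(t)^\top M \hat x(t)] = \tr\!\bigl(M\, e^{(A+BK)t} D_0\, e^{(A+BK)^\top t}\bigr)$ with $M := Q + K^\top R K \succeq 0$. Since $D_0 \succ 0$ and $e^{(A+BK)t}$ is invertible for every $t$, the matrix $e^{(A+BK)t} D_0 e^{(A+BK)^\top t}$ is positive definite, so $\tr(M\,\cdot)$ is strictly positive whenever $M \ne 0$; hence $\bE[g(t)] > 0$ for all $t$. Integrating against the strictly positive kernel on $(0,\infty)$ then yields $J(K,\beta) < J(K,\alpha)$. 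The step relies on $M \ne 0$, which fails only in the degenerate case $Q = 0$ and $K = 0$ where the cost is identically zero; I would flag this nondegeneracy explicitly, observing that it is automatic as soon as $Q \ne 0$. The stability half and the weak inequality are routine, with the only care needed in justifying the interchange of expectation and integral and the strict positivity of the expected running cost.
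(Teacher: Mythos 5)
Your proof is correct and follows essentially the same route as the paper: the eigenvalue-shift identity $A-\beta I+BK = (A-\alpha I+BK)-(\beta-\alpha)I$ for stability, and the pointwise decrease of the weight $e^{-2\alpha t}$ in the representation \eqref{eq:damped-rep} for the cost. The paper simply asserts the strict inequality from \eqref{eq:damped-rep} without further justification; your more careful treatment of strictness --- including the observation that it genuinely fails in the degenerate case $Q=0$, $K=0$, where the cost is identically zero --- is a refinement of, not a departure from, the paper's argument.
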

\begin{proof}
  From the formulation \eqref{eq:damped-odc}, when $A-\alpha I +BK$ is stable and  $\beta>\alpha$, it holds that $A-\beta I +BK = (A-\alpha I + BK) - (\beta - \alpha) I$ is stable. Therefore, $J(K, \beta)$ is well-defined. From formulation \eqref{eq:damped-rep}, $J(K, \beta) < J(K, \alpha)$. 
\end{proof}
The ODC problem can be succinctly written as 
\begin{equation}
\begin{aligned} \label{eq:damped-odc}
\min \quad &  J(K, \alpha) \\
s.t. \quad & K\in \mathcal{S} \\ 
           & A - \alpha I + BK \text{ is stable}. 
\end{aligned}
\end{equation}
We denote its set of globally optimal controllers by $K^*(\alpha)$, and its set of locally optimal controllers by $K^\dagger(\alpha)$. The paper studies the properties of $K^*(\alpha)$, $K^\dagger(\alpha)$, and $J(K, \alpha)$ for $K\in K^*(\alpha)$ or $K^\dagger(\alpha)$. 

To motivate the study of $K^\dagger(\alpha)$, consider Figure~\ref{fig:expeg} below. The set-up of the experiments will be detailed in Section~\ref{sec:numerical}. It is known that systems of this type have a large number of locally optimal controllers~\cite{fengExponentialNumberConnected}. The left figure plots selected trajectories of $J(K, \alpha)$ against $\alpha$, where $K\in K^\dagger(\alpha)$. The selected trajectories are connected to a stabilizing controller in $K^\dagger(0)$. The lowest curve corresponds to $J(K^*(\alpha), \alpha)$.  The right figure plots the distance of the selected $K\in K^\dagger(\alpha)$ to the one $K\in K^*(\alpha)$. 

\begin{figure}[htbp]
  \centering
  \includegraphics[width=0.49\textwidth]{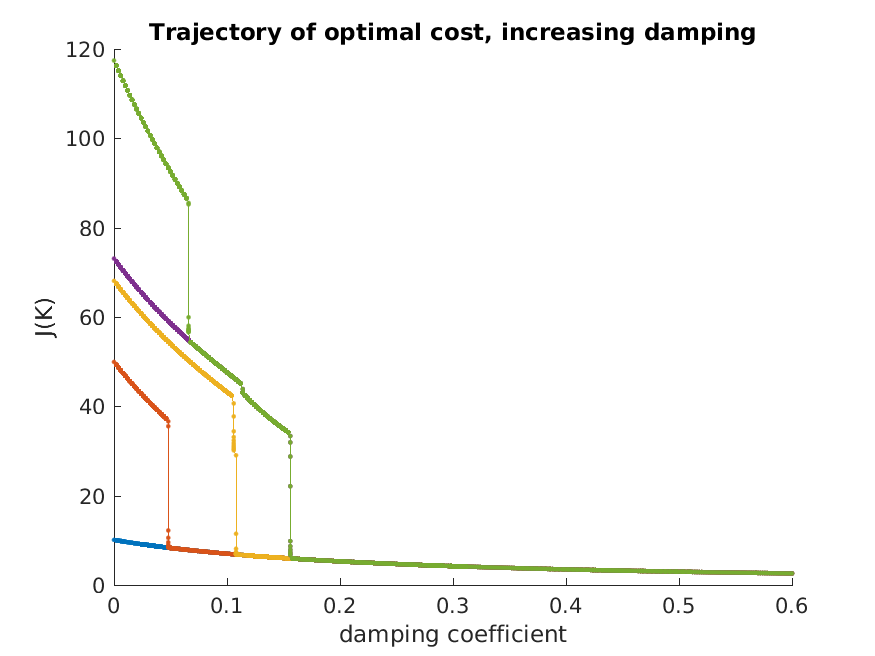}
  \includegraphics[width=0.49\textwidth]{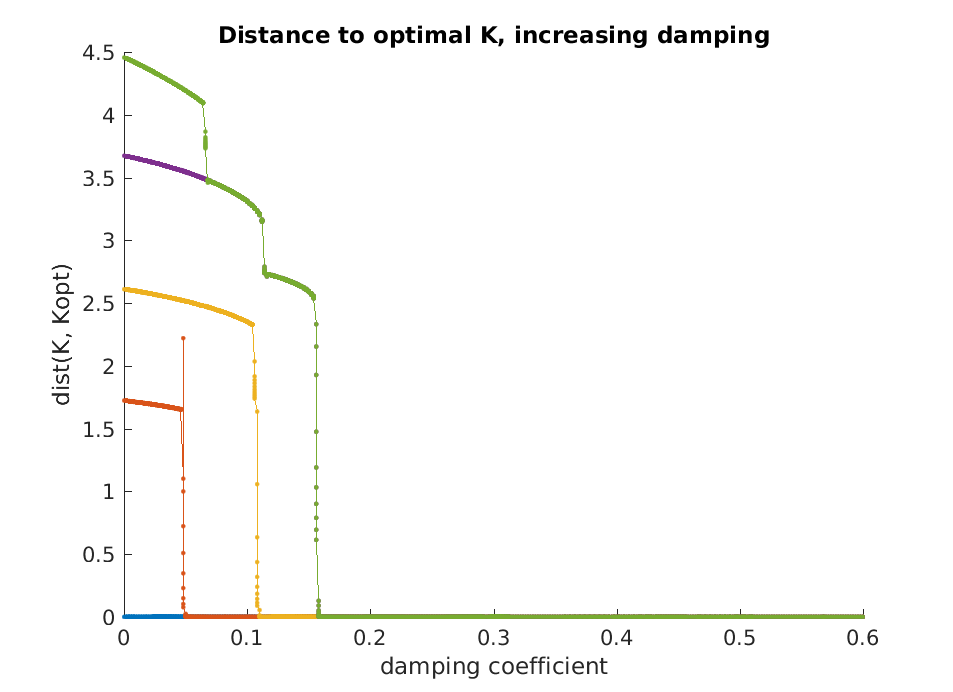}
  \caption{\label{fig:expeg}Trajectory of system in equation \eqref{eq:aeg}}
\end{figure}    

The fact that modest damping causes the locally optimal trajectories to ``collapse'' to each other is a very attractive phenomenon. Especially, they suggest two improving heuristics. 
\begin{itemize}
  \item Solve~\eqref{eq:damped-odc} from a large $\alpha$ and then gradually decrease $\alpha$ to $0$. 
  \item Start from a locally optimal $K\in K^\dagger(\alpha)$, solve~\eqref{eq:damped-odc} while gradually increase $\alpha$ to a positive value and then decrease $\alpha$ to $0$. 
\end{itemize}
The first idea shall avoid many unnecessary local optimum and its empirical behavior has been documented in \cite{zigicHomotopyApproachesH21991}. The second idea has the potential to improve the locally optimal controllers obtained from many other methods. Due to the NP-hardness of general ODC, we expect no guarantee of producing a globally optimal, or even a stabilizing, decentralized controller. The breakdown of these heuristics will be discussed in Section~\ref{sec:numerical}.

\section{Continuity}\label{sec:continuity}
This section studies the continuity properties of $K^*(\alpha)$ and $K^\dagger(\alpha)$. The key notion of hemi-continuity captures the evolution of parametrized optimization problems. 
\begin{definition}
  The set valued map $\Gamma: \mathcal{A}\to \mathcal{B}$ is said to be upper hemi-continuous (uhc) at a point $a$ if for any open neighborhood $V$ of $\Gamma(a)$ there exists a neighborhood $U$ of $a$ such that $\Gamma(U)\subseteq V$. 
\end{definition}
A related notion of lower hemi-continuity is provided in the supplement. A set-valued map is said to be continuous if it is both upper and lower hemi-continuous. A single-valued function is continuous if and only if it is uhc. We restate a version of Berge Maximum Theorem with a compactness assumption from~\cite{okRealAnalysisEconomic2007}. 
\begin{lemma}[Berge Maximum Theorem]
Let $\mathcal{A}\subseteq \bR$ and $\mathcal{S}\subseteq \bR^{m\times n}$, assume that $J: \mathcal{S}\times \mathcal{A} \to \bR$ is jointly continuous and $\Gamma: \mathcal{A} \to \mathcal{S}$ is a compact-valued correspondence. Define
\begin{align}\label{eq:parametric-opt}
K^*(\alpha) = \argmin \{J(K,\alpha) | K \in \Gamma(\alpha)\}, \text{ for all } \alpha \in \mathcal{A}, 
\end{align}and
  \[ J(K^*(\alpha), \alpha) = \min \{J(K,\alpha) | K \in \Gamma(\alpha)\}, \text{ for all }\alpha \in \mathcal{A}.\]
If $\Gamma$ is continuous at some $\alpha\in \mathcal{A}$, then $J(K^*(\alpha), \alpha)$ is continuous at $\alpha$. Furthermore, $K^*$ is non-empty, compact-valued, closed, and upper hemi-continuous.  
\end{lemma}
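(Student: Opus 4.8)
The plan is to establish the four conclusions in the natural order, treating the pointwise structure of $K^*(\alpha)$ first and then its variation in $\alpha$. First I would fix $\alpha$ and argue non-emptiness and compactness of $K^*(\alpha)$ by the Weierstrass extreme value theorem: since $\Gamma(\alpha)$ is a non-empty compact set and $K \mapsto J(K,\alpha)$ is continuous on it, the minimum is attained, so $K^*(\alpha) \neq \emptyset$. Writing $V(\alpha) := J(K^*(\alpha),\alpha)$ for the value function, the minimizer set is $\{K \in \Gamma(\alpha) : J(K,\alpha) = V(\alpha)\}$; being the intersection of the closed level set of a continuous function with the compact set $\Gamma(\alpha)$, it is compact.

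Next I would prove continuity of $V$ at $\alpha$ by splitting it into upper and lower semicontinuity, each half drawing on one half of the continuity of $\Gamma$. For upper semicontinuity, take $\alpha_n \to \alpha$ and a minimizer $K \in K^*(\alpha)$; lower hemi-continuity of $\Gamma$ supplies $K_n \in \Gamma(\alpha_n)$ with $K_n \to K$, whence $V(\alpha_n) \le J(K_n,\alpha_n) \to J(K,\alpha) = V(\alpha)$ by joint continuity, so $\limsup_n V(\alpha_n) \le V(\alpha)$. For lower semicontinuity, choose minimizers $K_n \in K^*(\alpha_n)$; upper hemi-continuity at $\alpha$ lets me enclose $\Gamma(\alpha_n)$ in a fixed compact neighborhood of $\Gamma(\alpha)$ for all large $n$, so $\{K_n\}$ has a convergent subsequence $K_{n_k} \to K$, and the closed-graph property of a compact-valued uhc correspondence forces $K \in \Gamma(\alpha)$; then $V(\alpha_{n_k}) = J(K_{n_k},\alpha_{n_k}) \to J(K,\alpha) \ge V(\alpha)$, giving $\liminf_n V(\alpha_n) \ge V(\alpha)$. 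Combining the two bounds yields $V(\alpha_n) \to V(\alpha)$.

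Finally I would deduce that $K^*$ is closed and upper hemi-continuous via a single closed-graph argument that reuses the value-function continuity just obtained. Suppose $\alpha_n \to \alpha$, $K_n \in K^*(\alpha_n)$, and $K_n \to K$. Upper hemi-continuity of $\Gamma$ (closed graph) gives $K \in \Gamma(\alpha)$, while joint continuity of $J$ together with $J(K_n,\alpha_n) = V(\alpha_n) \to V(\alpha)$ gives $J(K,\alpha) = V(\alpha)$; hence $K \in K^*(\alpha)$, establishing closedness. Since the images of $K^*$ near $\alpha$ are contained in the same fixed compact neighborhood used above, $K^*$ is locally bounded, and a locally bounded correspondence with closed graph is upper hemi-continuous, which completes the proof.

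The main obstacle will be the lower-semicontinuity half: it is where compactness is indispensable, because without the uniform enclosure of $\Gamma(\alpha_n)$ in a compact set one can neither extract the convergent subsequence nor invoke the closed-graph property to keep the limit inside $\Gamma(\alpha)$. Care is also needed to state and apply the standard equivalence that closed graph plus local boundedness implies uhc, which holds precisely in the compact-valued setting assumed here; the remaining steps are routine applications of sequential continuity.
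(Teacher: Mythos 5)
Your proof is correct: it is the standard textbook argument for Berge's Maximum Theorem, with the value function's upper semicontinuity drawn from lower hemi-continuity of $\Gamma$ and its lower semicontinuity from upper hemi-continuity plus the compact enclosure that permits subsequence extraction, followed by the closed-graph-plus-local-boundedness characterization of uhc for $K^*$. The paper itself supplies no proof of this lemma --- it restates the result from its cited reference --- so there is no in-paper argument to compare against; your write-up fills that gap correctly, and the only minor points worth polishing are to note explicitly that $\Gamma$ is assumed non-empty-valued (needed for Weierstrass and for selecting $K_n \in K^*(\alpha_n)$) and that the lower-semicontinuity bound should be run along a subsequence realizing $\liminf_n V(\alpha_n)$.
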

Berge Maximum Theorem does not trivially apply to ODC: the set of stabilizing controllers is open and often unbounded. However, a lower-level set trick applies. 

\begin{theorem}\label{thm:decrease}
 Assume that $K^*(0)$ is non-empty, then the set $K^*(\alpha)$ is non-empty for all $\alpha>0$. $K^*(\alpha)$ is upper hemi-continuous and the optimal cost $J(K^*(\alpha), \alpha)$ is continuous and strictly decreasing in $\alpha$. 
\end{theorem}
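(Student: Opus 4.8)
The plan is to coerce the ODC into the form required by the Berge Maximum Theorem by replacing the open, unbounded stabilizing set with a compact sublevel set of the cost (the \emph{lower-level set trick}). Throughout I use the representation $J(K,\alpha)=\tr(P(K,\alpha)D_0)$, where $P(K,\alpha)\succeq 0$ uniquely solves $(A-\alpha I+BK)^\top P+P(A-\alpha I+BK)+Q+K^\top R K=0$ whenever $A-\alpha I+BK$ is stable. On the open set $\mathcal{G}=\{(K,\alpha):A-\alpha I+BK\text{ stable}\}$ the Lyapunov operator is boundedly invertible and depends smoothly on its data, so $J$ is jointly continuous on $\mathcal{G}$. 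The two facts I will lean on are that, for each fixed $\alpha$, $J(\cdot,\alpha)\to+\infty$ as $K$ approaches the stability boundary and as $\|K\|\to\infty$; consequently every sublevel set $\{K\in\mathcal{S}:J(K,\alpha)\le c\}$ is a compact subset of the open stabilizing region (the coercivity and properness of the LQ cost, established for the policy-gradient landscape in \cite{fazelGlobalConvergencePolicy2018}, transfer verbatim since damping only shifts $A\mapsto A-\alpha I$).

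First I would establish non-emptiness. Fixing any $K_0\in K^*(0)$, the damping property (Lemma~\ref{def:damping}) shows $K_0$ stabilizes $(A-\alpha I,B)$ for every $\alpha\ge 0$ with $J(K_0,\alpha)\le J(K_0,0)=:c_0$. For each $\alpha>0$ the continuous $J(\cdot,\alpha)$ attains its infimum over the nonempty compact set $\{K:J(K,\alpha)\le c_0\}$, and since $J>c_0$ off this set the minimizer is global; hence $K^*(\alpha)\neq\emptyset$. Defining the correspondence $\Gamma(\alpha)=\{K\in\mathcal{S}:J(K,\alpha)\le c_0\}$, the same reasoning gives $K^*(\alpha)=\argmin\{J(K,\alpha):K\in\Gamma(\alpha)\}$, so the parametric program \eqref{eq:parametric-opt} now has a compact-valued feasible correspondence.

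Next I would establish continuity of the value $v(\alpha):=J(K^*(\alpha),\alpha)$. For lower semicontinuity, every minimizer satisfies $v(\alpha)\le J(K_0,\alpha)\le c_0$, so along $\alpha_n\to\alpha_0$ the minimizers $K_n$ lie in a fixed compact set bounded away from the stability boundary; passing to a convergent subsequence $K_n\to K$ with $K$ stabilizing at $\alpha_0$, joint continuity of $J$ yields $v(\alpha_0)\le J(K,\alpha_0)=\lim J(K_n,\alpha_n)$, hence $v(\alpha_0)\le\liminf_{\alpha\to\alpha_0}v(\alpha)$. For upper semicontinuity the damping structure does the real work: fixing $K^\star\in K^*(\alpha_0)$, the spectrum of $A-\alpha I+BK^\star$ is that of $A+BK^\star$ shifted by $-\alpha$, so if the largest real part of the eigenvalues of $A+BK^\star$ is $\mu<\alpha_0$, then $K^\star$ stabilizes $(A-\alpha I,B)$ for all $\alpha>\mu$, a two-sided neighborhood of $\alpha_0$. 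There $\alpha\mapsto J(K^\star,\alpha)$ is continuous, so $v(\alpha)\le J(K^\star,\alpha)\to v(\alpha_0)$ and $\limsup_{\alpha\to\alpha_0}v(\alpha)\le v(\alpha_0)$. Thus $v$ is continuous, and upper hemi-continuity of $K^*$ follows by a closed-graph argument: any $K_n\in K^*(\alpha_n)$ with $\alpha_n\to\alpha_0$ subconverges to some $K$ with $J(K,\alpha_0)=\lim v(\alpha_n)=v(\alpha_0)$, i.e. $K\in K^*(\alpha_0)$, and confinement of the $K_n$ to a fixed compact set turns this closed graph into upper hemi-continuity.

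Finally, strict monotonicity is immediate from damping: for $\beta>\alpha$ and any $K^\star\in K^*(\alpha)$, the controller $K^\star$ is feasible at $\beta$ with $J(K^\star,\beta)<J(K^\star,\alpha)=v(\alpha)$, so $v(\beta)\le J(K^\star,\beta)<v(\alpha)$. I expect the main obstacle to be the properness claim underlying the lower-level set trick, namely that the sublevel sets are compact inside the open stabilizing region and remain uniformly bounded as $\alpha$ ranges over a compact interval; this rests on the blow-up of $J$ at the stability boundary together with its coercivity in $\|K\|$ (using $R\succ 0$ and $D_0\succ 0$). The secondary subtlety is the left-continuity of $v$, i.e. the lower-hemicontinuity direction of the feasible correspondence, which would ordinarily fail at the stability boundary but is rescued here by the exact eigenvalue shift produced by the $-\alpha I$ damping term.
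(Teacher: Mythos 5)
Your proof is correct and rests on the same two pillars as the paper's argument: the damping property of Lemma~\ref{def:damping} for non-emptiness and strict decrease, and the compactness of cost sublevel sets (the lower-level set trick) for existence and continuity. The genuine difference is in how the continuity is closed out. The paper feeds the correspondence $\Gamma_M(\alpha)=\{K\in\mathcal{S}: A-\alpha I+BK \text{ stable},\ J(K,\alpha)\le M\}$ into the Berge Maximum Theorem, which requires both upper \emph{and lower} hemi-continuity of $\Gamma_M$; the lower hemi-continuity is the delicate half and, as the supplement's Lemma~\ref{lem:lowerlevellhc} shows, it fails exactly when $M$ is a locally optimal cost value, so the paper must argue that this exceptional set is finite and choose $M$ generically. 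You sidestep that issue entirely: your upper-semicontinuity bound $v(\alpha)\le J(K^\star,\alpha)$ needs only the single optimizer $K^\star\in K^*(\alpha_0)$ to remain feasible on a two-sided neighborhood of $\alpha_0$, which is immediate from the exact eigenvalue shift produced by $-\alpha I$, while your lower-semicontinuity and closed-graph steps use only the upper hemi-continuity of the nested sublevel sets. The trade is a somewhat longer, self-contained sequential argument in exchange for never having to worry about exceptional level values $M$. Both routes lean on the same external fact --- properness of the LQ cost, so that sublevel sets are compact inside the open stabilizing region and the cost blows up at the stability boundary --- which neither proof establishes from scratch and which implicitly requires $D_0\succ0$, $R\succ0$, and sufficient observability through $Q$ (the paper only assumes $Q\succeq0$, so this caveat applies equally to the paper's citation of the compactness result).
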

\begin{proof}
When $K^*(0)$ is non-empty, there is an optimal decentralized controller for the undamped system. With the set of stabilizing controller non-empty, we incur the ``damping property'' in Lemma~\ref{def:damping} and conclude 
  \begin{align*}
  J(K^*(\alpha), \alpha) \leq J(K^*(0), \alpha) < J(K^*(0), 0). 
  \end{align*}
The inequality above assumed existence of the globally controller for all values of damping parameter $\alpha$. This is true because the lower-level set of $J(K, \alpha)$ is compact~\cite{toivonenGloballyConvergentAlgorithm1985}. Precisely, define $\Gamma_M(\alpha)$ to be
  \begin{align*}
  \Gamma_M(\alpha) = \{K\in S: A-\alpha I + BK \text{ is stable and } J(K, \alpha) \leq M\}. 
  \end{align*}
  The set-valued function $\Gamma_M$ is compact-valued for all fixed $\alpha$ given a fixed $M$.
  From the damping property, we can select any $M > J(K^*(0),0)$ and optimize instead over $\Gamma_M(\alpha)$ without losing any globally optimal controller. The continuity of $\Gamma_M(\alpha)$ at $\alpha$ for almost all $M$ is proved in the supplement. Berge maximum theorem then applies and yields the desired continuity of $K^*(\alpha)$ and $J(K^*(\alpha), \alpha)$. 
\end{proof}
The argument above can be extended to characterize all locally optimal controllers. A caveat is the possible existence of locally optimal controllers with unbounded cost. Their existence does not contradict the damping property --- damping can introduce locally optimal controllers that are not stabilizing without the damping. 
\begin{theorem}
  Assume that $K^\dagger(0)$ is non-empty, then the set $K^\dagger(\alpha)$ is nonempty for all $\alpha>0$. Suppose furthermore that at an $\alpha_0 > 0$
  \[ \lim_{\epsilon\to 0^+ } \sup_{\alpha \in [\alpha_0-\epsilon, \alpha_0+\epsilon]} \sup_{K\in K^\dagger(\alpha)} J(K, \alpha) < \infty, \] 
  then $K^\dagger(\alpha)$ is upper hemi-continuous at $\alpha_0$ and the optimal cost $J(K^\dagger(\alpha), \alpha)$ is upper hemi-continuous at $\alpha_0$. 
\end{theorem}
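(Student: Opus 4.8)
The plan is to reduce the set-valued statement to the Berge-type machinery already used for Theorem~\ref{thm:decrease}, and to isolate the one genuinely new difficulty: local optimality need not be preserved under limits. I would first dispatch non-emptiness. Since $K^\dagger(0)$ is non-empty there is a stabilizing $K\in\mathcal{S}$ for $(A,B)$, so the feasible set of \eqref{eq:damped-odc} at $\alpha=0$ is non-empty; choosing $M$ above its value and using compactness of $\Gamma_M(0)$ from \cite{toivonenGloballyConvergentAlgorithm1985}, the cost attains a global minimum, i.e. $K^*(0)\neq\emptyset$. Theorem~\ref{thm:decrease} then gives $K^*(\alpha)\neq\emptyset$ for all $\alpha>0$, and since any global minimizer over the (relatively open) feasible set is in particular a local minimizer, $K^*(\alpha)\subseteq K^\dagger(\alpha)$, so $K^\dagger(\alpha)\neq\emptyset$.

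For upper hemi-continuity I would localize using the sublevel-set trick. The hypothesis supplies a finite $M$ and an $\epsilon>0$ with $\sup_{\alpha\in[\alpha_0-\epsilon,\alpha_0+\epsilon]}\sup_{K\in K^\dagger(\alpha)}J(K,\alpha)\le M$, so $K^\dagger(\alpha)\subseteq\Gamma_M(\alpha)$ on this window. Because $\Gamma_M$ is compact-valued and (for almost every $M$) continuous, its image over the compact interval $[\alpha_0-\epsilon,\alpha_0+\epsilon]$ lies in a fixed compact set $\mathcal{K}$. With all relevant controllers trapped in $\mathcal{K}$, upper hemi-continuity at $\alpha_0$ is equivalent to the sequential closed-graph criterion: whenever $\alpha_n\to\alpha_0$, $K_n\in K^\dagger(\alpha_n)$, and $K_n\to\bar K$, one must have $\bar K\in K^\dagger(\alpha_0)$. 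Compactness guarantees convergent subsequences, so the whole question collapses to this closure property.

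The closed-graph step is where I expect the real work to be, and it is the main obstacle. Suppose toward a contradiction that $\bar K\notin K^\dagger(\alpha_0)$; then there is a nearby $K'$ with $J(K',\alpha_0)<J(\bar K,\alpha_0)$. Joint continuity of $J$ (it equals $\tr(P D_0)$ for $P$ solving the closed-loop Lyapunov equation, hence is smooth on the stabilizing region) gives $J(K',\alpha_n)<J(K_n,\alpha_n)$ for large $n$ with $K'$ close to $K_n$; the difficulty is that local optimality of $K_n$ only excludes competitors inside a basin of radius $r_n$, and $r_n$ may shrink to zero in a fold-type collapse of two critical points, so $K'$ may fall outside the basin. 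I would attack this via the first-order characterization: each $K_n$ satisfies the stationarity condition $\nabla_K J(K_n,\alpha_n)\circ I_{\mathcal{S}}=0$, and since $\nabla_K J$ is jointly continuous this condition is closed, forcing $\bar K$ to be at least stationary for $\alpha_0$. Ruling out the degenerate case in which $\bar K$ is stationary but not a minimizer is precisely the crux, and I expect it to require a non-degeneracy argument on the Hessian restricted to $\mathcal{S}$ or the analytic structure of $J$ to preclude vanishing basins. Once the closure property is secured, upper hemi-continuity of $K^\dagger$ follows, and upper hemi-continuity of the cost map $\alpha\mapsto\{J(K,\alpha):K\in K^\dagger(\alpha)\}$ is then immediate from the standard fact that the continuous image of a compact-valued upper hemi-continuous correspondence is upper hemi-continuous.
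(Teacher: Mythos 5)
Your non-emptiness argument and the localization via the sublevel sets $\Gamma_M(\alpha)$ match the paper's. The gap is in the closed-graph step, and you have named it yourself: you show that a limit $\bar K$ of local minimizers $K_n\in K^\dagger(\alpha_n)$ must satisfy the stationarity condition $\nabla J(\bar K,\alpha_0)\circ I_{\mathcal S}=0$, but the upgrade from ``stationary'' to ``local minimizer'' is left as an expectation that some non-degeneracy or analyticity argument will work. That step is never supplied, and your worry about fold-type collapse is exactly why it cannot be waved away: in general a convergent sequence of local minimizers of a parametrized smooth function can limit to a saddle or degenerate critical point, so the closure property you need for $K^\dagger$ (taken literally as the set of local minimizers) does not follow from joint continuity of $J$ and $\nabla J$ alone. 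As written, the proposal stops short of proving the theorem.

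The paper takes a different reduction that sidesteps this obstacle rather than resolving it head-on. It applies the Berge Maximum Theorem not to $\min_K J(K,\alpha)$ but to the auxiliary parametrized problem of minimizing $\|\nabla J(K,\alpha)\|$ subject to $K\in\Gamma_M(\alpha)$, with $M$ chosen from the theorem's hypothesis so that no controller in $K^\dagger(\alpha)$ for $\alpha$ near $\alpha_0$ is cut off, and with $M$ also chosen (per Lemma~\ref{lem:lowerlevellhc}) so that $\Gamma_M$ is continuous at $\alpha_0$. Berge then yields upper hemi-continuity of the argmin correspondence of this auxiliary problem, i.e., of the set of first-order stationary points inside the sublevel set, and the conclusion for $K^\dagger$ and for the cost map is read off at the level of the first-order conditions together with joint continuity of $J$. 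In effect the paper treats $K^\dagger$ through its stationarity characterization from the outset, which is precisely the move that makes the ``stationary but not minimizing'' case a non-issue for its argument. To finish your route you would either have to adopt the same reformulation, or actually supply the missing non-degeneracy argument --- which the paper does not attempt and which is not available in general.
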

\begin{proof}
That $K^\dagger(\alpha)$ is non-empty follows from the existence of globally optimal controllers in Theorem~\ref{thm:decrease}. Consider the parametrized optimization problem 
\begin{align}
 \min \quad & \|\nabla J(K, \alpha)\| \nonumber\\ 
   s.t. \quad & K \in \Gamma_M(\alpha) \label{eq:gradzero}. 
 \end{align} 
 The assumption ensures the existence of an $M$ and an $\epsilon>0$ such that $M > J(K, \alpha)$ for $K\in K^\dagger(\alpha)$ where $\alpha \in [\alpha_0-\epsilon, \alpha_0 + \epsilon]$. This choice of $M$ guarantees that the formulation \eqref{eq:gradzero} does not cut off any locally optimal controllers. As proved in the supplement, $\Gamma_M(\alpha)$ is continuous at $\alpha_0$ for almost any $M$, and a large $M$ can be selected to make $\Gamma_M(\alpha)$ continuous at $\alpha_0$. Berge Maximum Theorem applies to conclude that $K^\dagger(\alpha)$ is upper hemi-continuous. Since $J(K, \alpha)$ is jointly continuous in $(K, \alpha)$, $J(K^\dagger(\alpha), \alpha)$ is upper hemi-continuous. 
\end{proof}


\section{Asymptotic Properties}\label{sec:asymptotic}
In this section, we state asymptotic properties of the local solutions $K^\dagger(\alpha)$. The controllers $K\in K^\dagger(\alpha)$ satisfy the first order necessary conditions in the following equations \eqref{eq:fonp}-\eqref{eq:sparsity}; their derivation can be found in \cite{rautertComputationalDesignOptimal1997}. 
\begin{align}
& (A - \alpha I + BK)^\top  P_\alpha(K) +P_\alpha(K) (A - \alpha I + BK) + K^\top  RK + Q = 0 \label{eq:fonp}\\
& L_\alpha(K)(A-\alpha I  + BK)^\top  + (A-\alpha I +BK)L_\alpha(K) + D_0 = 0 \label{eq:fonl} \\ 
&  ((B^\top P_{\alpha}(K) + RK) L_\alpha(K))\circ I_S= 0\label{eq:stationary}\\
& K \circ I_\mathcal{S}=K \label{eq:sparsity}. 
\end{align}
The above conditions provide a closed-form expression of the cost 
\begin{equation}\label{eq:jpk}
  J(K, \alpha) = \tr(D_0 P_\alpha(K)). 
\end{equation}
It is worth pointing out that equations~\eqref{eq:fonp}-\eqref{eq:jpk} are algebraic, involving only polynomial functions of the unknown matrices $K, P_\alpha$ and $L_\alpha$. The matrices $P_\alpha$ and $L_\alpha$ are  written as a function of $K$ because they are uniquely determined from~\eqref{eq:fonp} and \eqref{eq:fonl} given a stabilizing controller $K$. 
The following theorem characterizes the evolution of locally optimal controllers for a specific sparsity pattern. The theorem justifies the practice of random initialization around zero. 

\begin{theorem}\label{thm:converge-to-zero}
  Suppose that the sparsity pattern $I_S$ is block-diagonal with square blocks and that $R$ has the same sparsity pattern as $I_S$. Then, all points in $K^\dagger$ converge to the zero matrix as $\alpha \to \infty$. Furthermore, $J(K, \alpha)\to 0$ as $\alpha\to\infty$ for all $K\in K^\dagger(\alpha)$. 
\end{theorem}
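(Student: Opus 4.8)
The plan is to convert the block-diagonal structure into an explicit formula for each diagonal block of a stationary controller, and then feed in the $1/\alpha$ decay rates of the two Lyapunov solutions $P_\alpha,L_\alpha$ to force every block to vanish.

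First I would exploit the hypotheses. Since $I_S$ is block diagonal with square blocks and $R$ shares this pattern, every feasible (hence every locally optimal) controller has the form $K=\diag(K_1,\dots,K_p)$ with $R=\diag(R_1,\dots,R_p)$ and each $R_i\succ0$. Because the mask $\circ\,I_S$ simply extracts the diagonal blocks, the stationarity condition \eqref{eq:stationary} is equivalent to $[(B^\top P_\alpha+RK)L_\alpha]_{ii}=0$ for every $i$, where $[\,\cdot\,]_{ii}$ denotes the $i$-th diagonal block. Using that $RK$ is block diagonal, this expands to $R_iK_i(L_\alpha)_{ii}+[B^\top P_\alpha L_\alpha]_{ii}=0$. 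The block $(L_\alpha)_{ii}$ is a principal submatrix of $L_\alpha$, which is positive definite because $D_0\succ0$ and the damped closed loop $\bar A=A-\alpha I+BK$ is stable (so \eqref{eq:fonl} has solution $L_\alpha=\int_0^\infty e^{\bar A t}D_0e^{\bar A^\top t}\,dt\succ0$); hence $(L_\alpha)_{ii}$ is invertible and I obtain the closed form
\[ K_i=-\,R_i^{-1}\,[B^\top P_\alpha L_\alpha]_{ii}\,(L_\alpha)_{ii}^{-1}. \]

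Next, granting that $\sup_\alpha\|K_\alpha\|<\infty$, I would read off the rates directly from the Sylvester forms of \eqref{eq:fonp}--\eqref{eq:fonl}, namely $2\alpha P_\alpha=(A+BK)^\top P_\alpha+P_\alpha(A+BK)+K^\top RK+Q$ and $2\alpha L_\alpha=(A+BK)L_\alpha+L_\alpha(A+BK)^\top+D_0$. Since $\|A+BK_\alpha\|$ stays bounded, the triangle inequality gives $\|P_\alpha\|\le\|K^\top RK+Q\|/(2\alpha-2\|A+BK\|)=O(1/\alpha)$ and likewise $\|L_\alpha\|=O(1/\alpha)$ for large $\alpha$, so the shift $-\alpha I$ itself supplies all the needed stability margin. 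The same equations yield $\alpha P_\alpha\to\tfrac12(K_\infty^\top RK_\infty+Q)$ and $\alpha L_\alpha\to\tfrac12 D_0$ along any subsequence $K_\alpha\to K_\infty$. Moreover $2\alpha L_\alpha\succeq D_0-2\|A+BK\|\,\|L_\alpha\|\,I\succeq\tfrac12\lambda_{\min}(D_0)I$ for large $\alpha$, whence $(L_\alpha)_{ii}\succeq(c/\alpha)I$ and $(L_\alpha)_{ii}^{-1}=O(\alpha)$, while $P_\alpha L_\alpha=O(1/\alpha^2)$ gives $[B^\top P_\alpha L_\alpha]_{ii}=O(1/\alpha^2)$. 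Substituting into the displayed relation, the $O(1/\alpha^2)$ numerator beats the $O(\alpha)$ factor, so $K_i=O(1/\alpha)\to0$; thus every convergent subsequence of $K_\alpha$ has limit $0$, and together with boundedness this yields $K_\alpha\to0$. The cost statement is then immediate from \eqref{eq:jpk}: $J(K_\alpha,\alpha)=\tr(D_0P_\alpha)\to0$ since $P_\alpha\to0$ (equivalently $J=\tr(L_\alpha(K^\top RK+Q))\to0$).

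The hard part is precisely the a priori boundedness of $K_\alpha$ over $K^\dagger(\alpha)$, which the $1/\alpha$ rates silently assume; everything else is routine once it holds. Boundedness is not free: the preceding theorem already warns that damping can create local optima, and a large gain rendering $\bar A$ only marginally stable would make $\|A+BK\|$ grow with $\alpha$ and break the $O(1/\alpha)$ estimates. Indeed the self-consistent bound $\|K\|\lesssim\|B\|\,\|P_\alpha\|\lesssim\|B\|(\|K\|^2+1)/\alpha$ extracted from the displayed relation admits both a small root of size $O(1/\alpha)$ and a spurious large root of size $O(\alpha)$. To eliminate the large-gain branch I would argue by contradiction via rescaling: assuming $\|K_{\alpha_k}\|\to\infty$ along $\alpha_k\to\infty$, set $\hat K=K/\|K\|$, extract $\hat K\to\hat K_\infty$ with $\|\hat K_\infty\|=1$, and show that the block stationarity relation is incompatible with stability of $\bar A$ in the limit. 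I expect this exclusion step to absorb most of the technical effort, with the bounded case then reducing verbatim to the rate computation above.
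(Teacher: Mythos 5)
Your block formula $K_i=-R_i^{-1}[B^\top P_\alpha L_\alpha]_{ii}(L_\alpha)_{ii}^{-1}$ and the ensuing rate count ($\|P_\alpha\|,\|L_\alpha\|=O(1/\alpha)$, $(L_\alpha)_{ii}^{-1}=O(\alpha)$, numerator $O(1/\alpha^2)$) match the paper's computation, and you correctly diagnose that everything hinges on an a priori bound for $K$ over $K^\dagger(\alpha)$. That is exactly where the gap is: you assume the bound ("granting that $\sup_\alpha\|K_\alpha\|<\infty$") and only sketch a rescaling-by-$\|K\|$ contradiction to exclude the large-gain branch, without carrying it out. As you yourself note, the self-consistent inequality $\|K\|\lesssim(\|K\|^2+1)/\alpha$ admits an $O(\alpha)$ root, so the exclusion is the entire content of the theorem on this route; a compactness argument on $\hat K=K/\|K\|$ is not obviously closable, because for large $\|K\|$ both $P_\alpha$ and $L_\alpha$ scale with $K$ in ways that are hard to control jointly with $\alpha$, and the limit object need not violate stationarity. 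The proposal as written is therefore incomplete.

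The paper circumvents boundedness entirely, in two moves you do not have. First, it establishes $J(K,\alpha)\to 0$ uniformly over $K\in K^\dagger(\alpha)$ \emph{before} touching the stationarity formula: since the first-order conditions \eqref{eq:fonp}--\eqref{eq:sparsity} are algebraic, new local solutions can appear only finitely often, so for $\beta>\alpha>\alpha_0$ the damping property gives $\max_{K\in K^\dagger(\beta)}J(K,\beta)\le\max_{K\in K^\dagger(\alpha)}J(K,\beta)$, and the right side is a maximum over a \emph{fixed finite} set of controllers, which tends to $0$ by dominated convergence applied to \eqref{eq:damped-rep}. With $D_0\succ 0$ this yields $\|P_\alpha(K)\|\to 0$ with no hypothesis on $\|K\|$. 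Second, in the estimates for $\lambda_{\min}(L_\alpha)$ and $\|L_\alpha\|$ the paper never leaves $\|A+BK\|$ as a free quantity: it substitutes $BK=-BR^{-1}((B^\top P_\alpha L_\alpha)\circ I_S)(L_\alpha\circ I_S)^{-1}$ from stationarity back into the Lyapunov equation \eqref{eq:fonl} (using $\tr((L_\alpha\circ I_S)^{-1}L_\alpha)=n$), so the resulting inequalities involve only $P_\alpha$ and $L_\alpha$ and close on themselves once $\|P_\alpha\|$ is small. If you want to complete your proof, you should either import this cost-decay argument or find a genuine substitute for it; the remainder of your write-up then goes through essentially as the paper's does.
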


Not only do all locally optimal controllers approach zero, the problem is in fact convex over bounded regions with enough damping. 
\begin{theorem}\label{thm:convex}
For any given $r>0$, the Hessian matrix $\nabla^2 J(K, \alpha)$ is positive definite over $\|K\| \leq r$ for all large $\alpha$. 
\end{theorem}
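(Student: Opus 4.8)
The plan is to rescale the Hessian by $\alpha$ and show that it converges uniformly on the ball to a constant, positive-definite quadratic form. First I would check that the statement is well-posed. For $\|K\|\le r$ the eigenvalues of $A-\alpha I+BK$ are exactly those of $A+BK$ shifted by $-\alpha$, and since $\|A+BK\|\le \|A\|+r\|B\|=:C$ uniformly over the ball, every such $K$ stabilizes $(A-\alpha I,B)$ as soon as $\alpha>C$. Hence $P_\alpha(K)$ (the solution of \eqref{eq:fonp}) is a rational, and in particular real-analytic, function of $K$ on the whole closed ball $\{\|K\|\le r\}$ for all large $\alpha$, so $J(\cdot,\alpha)=\tr(D_0 P_\alpha(\cdot))$ is $C^2$ there and $\nabla^2 J(K,\alpha)$ is defined at every such $K$.

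Next I would extract the leading-order behavior of the Lyapunov solution. Writing $W(K)=K^\top R K+Q$, the integral form of \eqref{eq:fonp} reads
\[
P_\alpha(K)=\int_0^\infty e^{(A-\alpha I+BK)^\top t}\,W(K)\,e^{(A-\alpha I+BK)t}\,dt .
\]
Using the factorization $e^{(A-\alpha I+BK)t}=e^{-\alpha t}e^{(A+BK)t}$ and substituting $s=2\alpha t$ gives
\[
2\alpha\,P_\alpha(K)=\int_0^\infty e^{-s}\,e^{(A+BK)^\top s/(2\alpha)}\,W(K)\,e^{(A+BK)s/(2\alpha)}\,ds .
\]
For $\alpha>C$ the integrand is dominated by $e^{-(1-C/\alpha)s}\,\|W(K)\|$, which is integrable and, since $\|W(K)\|\le r^2\|R\|+\|Q\|$ on the ball, furnishes a dominating function that is uniform in $K$. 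Dominated convergence then yields $2\alpha\,P_\alpha(K)\to W(K)=K^\top RK+Q$ uniformly on $\|K\|\le r$, and hence $2\alpha\,J(K,\alpha)\to \tr\!\big(D_0(K^\top RK+Q)\big)$.

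I would then upgrade this to convergence of the Hessian. Differentiating \eqref{eq:fonp} once and twice in $K$ along a direction $V$ (note $A-\alpha I+BK$ is affine in $K$, so its second derivative vanishes), the derivatives $\dot P$ and $\ddot P$ solve Lyapunov equations with the same coefficient matrix $A-\alpha I+BK$ and source terms assembled from $V^\top B^\top P+PBV$, $V^\top RK+K^\top RV$, and $V^\top RV$. The first rescaling shows $\dot P=O(1/\alpha)$, so the $P$-dependent pieces of the second-order source drop out in the limit, leaving the leading source $2\,V^\top RV$; applying the same substitution and uniform dominating bound to the $\ddot P$-equation gives $2\alpha\,\ddot P\to 2\,V^\top RV$ uniformly on the ball. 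Consequently the rescaled Hessian quadratic form converges uniformly to a limit that is \emph{constant} in $K$,
\[
\alpha\,\frac{d^2}{d\tau^2}\Big|_{\tau=0}J(K+\tau V,\alpha)=\alpha\,\tr(D_0\ddot P)\;\longrightarrow\;\tr(D_0\,V^\top RV).
\]
Writing $\tr(D_0 V^\top RV)=\tr\!\big((R^{1/2}VD_0^{1/2})(R^{1/2}VD_0^{1/2})^\top\big)=\|R^{1/2}VD_0^{1/2}\|_F^2$ and using $R\succ0$, $D_0\succ0$ (so $R^{1/2},D_0^{1/2}$ are invertible) shows this limiting form is positive definite in $V$; let $\lambda_0>0$ be its minimum over $\|V\|=1$. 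This already holds for the full Hessian on $\bR^{m\times n}$, hence a fortiori on the subspace $\mathcal S$.

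Finally, a compactness/perturbation step closes the argument: since $\alpha\,\nabla^2 J(K,\alpha)$ converges uniformly on the compact ball $\{\|K\|\le r\}$ to the constant form above, there is $\alpha_0$ so that $\alpha\,\nabla^2 J(K,\alpha)\succeq \tfrac12\lambda_0 I$ for all $\|K\|\le r$ and all $\alpha\ge\alpha_0$, whence $\nabla^2 J(K,\alpha)\succ0$ throughout the ball. The main obstacle is precisely the uniform $C^2$ control in the third step: one must justify differentiating the Lyapunov representation under the integral and, more importantly, verify that the second $K$-derivatives of $P_\alpha$ inherit the $1/\alpha$ scaling with an error that is $o(1/\alpha)$ \emph{uniformly} over the whole ball rather than merely pointwise. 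The uniform integrable dominating function $e^{-(1-C/\alpha)s}$ for the rescaled integrals is what makes this book-keeping go through.
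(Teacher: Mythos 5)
Your proof is correct, but it takes a genuinely different route from the paper's. The paper invokes the closed-form vectorized Hessian from Rautert--Sachs, $H_\alpha(K)=2\{(L_\alpha(K)\otimes R)+G_\alpha(K)^\top+G_\alpha(K)\}$, lower-bounds $\lambda_{\min}(L_\alpha(K)\otimes R)\geq \lambda_{\min}(D_0)\lambda_{\min}(R)/(2\alpha+2\|A+BK\|)=\Theta(1/\alpha)$, and shows the cross terms satisfy $\|G_\alpha(K)\|\lesssim \alpha^{-1}\|L_\alpha(K)\|=o(1/\alpha)$, so the first term dominates. You instead bypass the Hessian formula entirely: you rescale the Lyapunov representation of $P_\alpha$, use dominated convergence to get $2\alpha P_\alpha(K)\to K^\top RK+Q$ uniformly on the ball, and then differentiate the Lyapunov equation twice to show $\alpha\,\nabla^2 J(K,\alpha)$ converges uniformly to the constant positive-definite form $V\mapsto \tr(D_0V^\top RV)$. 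The two arguments rest on the same asymptotic scaling (the $L_\alpha\otimes R$ term in the paper is exactly your limiting form, since $2\alpha L_\alpha\to D_0$), but yours is self-contained and yields a strictly stronger conclusion --- an explicit limit $\alpha\nabla^2 J\to D_0\otimes R$ rather than mere eventual positive definiteness, and as a byproduct $2\alpha J(K,\alpha)\to\tr(D_0(K^\top RK+Q))$, which illuminates Theorem~\ref{thm:converge-to-zero} as well --- at the cost of the bookkeeping you correctly flag: justifying differentiation under the integral and verifying that the $\dot P=O(1/\alpha)$ and $o(1/\alpha)$ error estimates hold uniformly over the compact ball, which your uniform dominating function $e^{-(1-C/\alpha)s}$ does handle. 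Both proofs correctly reduce the structured case to the unstructured one by restricting a positive-definite form to the subspace $\mathcal{S}$.
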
 

The proof of the two theorems above is given in the supplement.

\begin{corollary}\label{cor:singlebigd}
With the assumption of Theorem~\ref{thm:converge-to-zero}, there is no spurious locally optimal controller for large $\alpha$. That is, $K^\dagger(\alpha) = K^*(\alpha)$ for all large $\alpha$. 
\end{corollary}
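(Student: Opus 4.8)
The plan is to combine the two preceding theorems: Theorem~\ref{thm:converge-to-zero} confines every locally optimal controller to a small fixed ball around the origin, while Theorem~\ref{thm:convex} makes $J(\cdot,\alpha)$ strictly convex on that ball, and strict convexity on a convex feasible region leaves room for only one stationary point. Concretely, first I would fix a radius, say $r=1$. By Theorem~\ref{thm:converge-to-zero}, every $K\in K^\dagger(\alpha)$ tends to the zero matrix as $\alpha\to\infty$, so there is a threshold $\alpha_1$ with $K^\dagger(\alpha)\subseteq\{K:\|K\|\le\tfrac12\}$ for all $\alpha>\alpha_1$; in particular each such $K$ lies strictly inside $B_r=\{K:\|K\|\le r\}$. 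Applying Theorem~\ref{thm:convex} with this $r$ yields a threshold $\alpha_2$ such that the Hessian $\nabla^2 J(K,\alpha)$ (taken over the free entries, i.e.\ over the subspace $\mathcal{S}$) is positive definite on $B_r$ for all $\alpha>\alpha_2$. I would then set $\bar\alpha=\max(\alpha_1,\alpha_2)$ and fix any $\alpha>\bar\alpha$.

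Next I would exploit convexity. The feasible region $B_r\cap\mathcal{S}$ is convex, and on it $J(\cdot,\alpha)$ has positive definite Hessian, hence is strictly convex. A differentiable strictly convex function on a convex open set has at most one stationary point: if $\nabla J(K_1,\alpha)=\nabla J(K_2,\alpha)=0$ with $K_1\neq K_2$, strict convexity forces $\langle \nabla J(K_1,\alpha)-\nabla J(K_2,\alpha),\,K_1-K_2\rangle>0$, contradicting that this inner product is zero. Every $K\in K^\dagger(\alpha)$ is a local minimum of the ODC problem lying strictly inside $B_r$ and inside the open set of stabilizing controllers, so it satisfies the first-order stationarity conditions \eqref{eq:stationary}-\eqref{eq:sparsity}, i.e.\ its projected gradient over $\mathcal{S}$ vanishes. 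By the uniqueness just established, all elements of $K^\dagger(\alpha)$ must coincide, so $K^\dagger(\alpha)$ is a singleton.

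Finally, since globally optimal controllers are in particular locally optimal, $K^*(\alpha)\subseteq K^\dagger(\alpha)$, and $K^*(\alpha)$ is non-empty by Theorem~\ref{thm:decrease}. A singleton that contains the non-empty set $K^*(\alpha)$ must equal it, giving $K^\dagger(\alpha)=K^*(\alpha)$ for all $\alpha>\bar\alpha$, which is the claim.

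The main obstacle I anticipate is the bookkeeping that connects ``local minimum of the ODC problem over the stabilizing set'' with ``interior stationary point of the strictly convex restriction $J|_{B_r\cap\mathcal{S}}$.'' I would need to check that the confinement in Theorem~\ref{thm:converge-to-zero} places these minima strictly in the interior of $B_r$ (so the ball constraint is inactive and the unconstrained first-order condition over $\mathcal{S}$ applies), and that the positive definiteness in Theorem~\ref{thm:convex} refers to the Hessian of the restriction to the sparsity subspace rather than the full matrix space. Once the quantifier order is handled correctly --- fix $r$ first, then extract the two thresholds and take their maximum --- the convexity step is routine.
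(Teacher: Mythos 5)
Your proposal is correct and follows essentially the same route as the paper: confine all local minimizers to a ball via Theorem~\ref{thm:converge-to-zero}, note that the whole ball is feasible (stabilizing) for large $\alpha$, and invoke the positive definiteness of the Hessian from Theorem~\ref{thm:convex} to conclude that local and global minimizers coincide. Your write-up merely spells out the strict-convexity-implies-unique-stationary-point step and the quantifier bookkeeping that the paper leaves implicit.
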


\begin{proof}
  For any given $r>0$, all controllers in the ball $\mathcal{B}=\{K: \|K\| \leq r\}$ are stabilizing when $\alpha$ is large. As a result, stability constraints can be relaxed over $\mathcal{B}$. Furthermore, from Theorem~\ref{thm:converge-to-zero}, when $\alpha$ is large, all locally optimal controllers will be inside $\mathcal{B}$. From Theorem~\ref{thm:convex}, the objective function become convex over $\mathcal{B}$ for large enough $\alpha$. The observations imply local and global solutions coincide. 
\end{proof}

The theorems above rely on the ``damping property'' in Lemma~\ref{def:damping}. It is worth commenting that damping the system with $-I$ is almost the only continuation method for general system matrices $A$ that achieves the monotonic increasing of stable sets. Formally, 
\begin{theorem}
    When $n\geq 3$, for any $n$-by-$n$ real matrix $H$ that is not a multiple of $-I$, there exists a stable matrix $A$ for which $A+H$ is unstable. 
\end{theorem}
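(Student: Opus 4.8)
The plan is to exploit the fact that the set of Hurwitz matrices $\mathcal{H}$ is an open cone: if $A$ is stable then so is $tA$ for every $t>0$. Consequently, there is a stable $A$ with $A+H$ unstable if and only if there are a stable $A$ and some $s>0$ with $A+sH$ unstable --- given such a pair $(A,s)$, the matrix $A/s$ is stable while $(A/s)+H=(A+sH)/s$ is unstable, and the converse is trivial ($s=1$). This reduction lets me treat $s$ as a small parameter and argue perturbatively, which sidesteps the difficulty that $H$ itself is fixed and possibly large.

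First I would record the elementary fact that a matrix all of whose vectors are eigenvectors must be a scalar multiple of the identity; hence $H$ not being a scalar multiple of the identity yields a real vector $v$ with $v$ and $Hv$ linearly independent. The two linear functionals $u\mapsto u^\top v$ and $u\mapsto u^\top Hv$ are then independent, so (since $n\ge 2$) the map $u\mapsto (u^\top v,\, u^\top Hv)$ is onto $\bR^2$ and I can choose a real $w$ with $w^\top v=1$ and $c:=w^\top Hv>0$. With these in hand I set $A_0=-L\,(I-vw^\top)$ for a fixed $L>0$; a direct computation shows $A_0$ is real with characteristic polynomial $\lambda(\lambda+L)^{\,n-1}$, so that $0$ is a simple eigenvalue with right eigenvector $v$ and left eigenvector $w$, while every other eigenvalue equals $-L$.

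Next I would perturb. Since $0$ is a simple eigenvalue of the real matrix $A_0$, the corresponding eigenvalue of $A_0+sH$ is real-analytic and stays real for small real $s$, with $\mu(s)=c\,s+O(s^2)$ because $\mu'(0)=w^\top Hv/(w^\top v)=c>0$. Hence $\mu(s_0)>0$ for all sufficiently small $s_0>0$; fix one such $s_0$. The matrix $A_0-\delta I$ is Hurwitz for every $\delta>0$ (its eigenvalues are $-\delta$ and $-L-\delta$), and since subtracting $\delta I$ shifts the whole spectrum, the relevant eigenvalue of $A_0-\delta I+s_0H$ is exactly $\mu(s_0)-\delta$. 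Choosing $\delta=\mu(s_0)/2>0$ makes this eigenvalue equal to $\mu(s_0)/2>0$, so $A_0-\delta I+s_0H$ is unstable while $A_0-\delta I$ is stable; the cone reduction of the first paragraph then produces the desired stable $A=(A_0-\delta I)/s_0$ with $A+H$ unstable.

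The step requiring care --- and where a naive argument collapses --- is the choice of the shift $\delta$. It is \emph{not} enough that the eigenvalue leaves the origin with positive derivative, because the $O(s^2)$ curvature can bend it back into the left half-plane before it ever crosses the imaginary axis; one must first certify that $\mu(s_0)$ is genuinely positive at a concretely chosen small $s_0$, and only then under-shift by $\delta<\mu(s_0)$. Everything else is routine linear algebra. I also note that this route uses only $n\ge 2$, so the stated hypothesis $n\ge 3$ is not essential to it.
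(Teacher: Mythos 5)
Your proof is correct, but it takes a genuinely different route from the paper's. The paper proves the statement through the set of ``stable directions'' $\mathcal{H}$: it first shows every $H\in\mathcal{H}$ is similar to a real diagonal matrix with non-positive entries by running through the possible $2\times 2$ real Jordan blocks with explicit counterexample matrices, then excludes low-rank diagonal $H$ using a disconnectedness example from a cited reference, and finally disposes of the remaining diagonal cases with hand-built $3\times 3$ matrices and the Routh--Hurwitz criterion. Your argument replaces all of this with a single perturbation computation: since $H$ is not scalar there is a $v$ with $v,Hv$ independent, you choose $w$ with $w^\top v=1$ and $w^\top Hv>0$, build the rank-deficient $A_0=-L(I-vw^\top)$ whose simple zero eigenvalue moves into the right half-plane at first order under $A_0+sH$, then under-shift by $\delta<\mu(s_0)$ and rescale via the cone property of the Hurwitz set. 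This is shorter, requires no explicit examples or Routh--Hurwitz computations, works uniformly for $n\geq 2$ (so the hypothesis $n\geq 3$ is indeed inessential, as you note), and makes transparent why scalar matrices are the only exceptions. The paper's longer route does yield extra structural information about $\mathcal{H}$ (spectral constraints on its members) that your argument does not recover, but for the stated theorem your approach is cleaner. One small caveat: if ``multiple of $-I$'' is read as only the non-negative multiples $-\lambda I$, $\lambda\geq 0$ (to match the paper's characterization $\mathcal{H}=\{-\lambda I,\ \lambda\geq 0\}$), then $H=cI$ with $c>0$ falls under the theorem's hypothesis but admits no $v$ with $v,Hv$ independent; that case is trivial (take $A=-\tfrac{c}{2}I$), but you should state it explicitly rather than silently assuming $H$ is not any scalar multiple of $I$.
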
 
The proof is given in the supplement. This theorem justifies the use of $-\alpha I$ as the continuation parameter. However, in a given system with structure, matrices other than $-I$ may be appropriate. 

\section{Numerical Experiments}\label{sec:numerical} 

In this section, we document various homotopy behaviors as the damping parameter $\alpha$ varies. The focus is on the evolution of locally optimal trajectories, which can be tracked by any local search methods. 
The experiments are performed on small-sized systems so the random initialization can find a reasonable number of distinct locally optimal solutions. Despite the small system dimension, the existence of many locally optimal solutions and their convoluted trajectories demonstrates what is possible in a theory of homotopy. 

The local search methods we used is the simplest projected gradient descent. At a controller $K^i$, we perform line search along the direction $\tilde{K}^i = - \nabla J(K) \circ I_S$. The step size is determined with backtracking and Armijo rule, that is, we select $s^i$ as the largest number in $\{\bar{s}, \bar{s}\beta, \bar{s}\beta^2, ...\}$ such that $K^i+s^i\tilde{K}^i$ is stabilizing while 
\[J(K^i+s^i\tilde{K}^i)<J(K^i)+\alpha s^i \langle \nabla J(K^i), \tilde{K}^i\rangle. \]
Our choice of parameters are $\alpha=0.001$, $\beta=0.5$, and $\bar{s}=1$. We terminate the iteration when the norm of the gradient is less than $10^{-3}$. 

\subsection{Systems with a large number of local minima}
We first consider the examples from \cite{fengExponentialNumberConnected}, where the feasible set is reasonably disconnected and admits many local minima. The system matrices are given by 
\begin{align}\label{eq:aeg}
A = \begin{bmatrix}
    -1 & 2 & 0 & 0\\
    -2 & 0 & 1 & 0 \\
     0 & -1 & 0 & 2\\ 
     0 & 0  & -2 & 0 \\
\end{bmatrix},
B &= \left[\begin{array} {cccc}
     0 & 1 & 0 & 0 \\
     -1 & 0 & 1 & 0 \\ 
     0 & -1 & 0 & 1 \\
     0 & 0 & -1 & 0\\ 
\end{array}\right],  D_0= I, \ I_\mathcal{S}=I. 
\end{align}
When the dimension $n$ is $4$, it is known that the set of stabilizing decentralized controllers has at least $5$ connected components. We sample the initial controllers from $N(0,1)$ and, after 1000 samples, obtain $5$ initial optimal solutions. We gradually increase the damping parameter from $0$ to $0.6$ with $0.002$ increment, and track the trajectories of locally optimal solutions by solving the newly damped system with the previous local optimal solution as the initialization. The evolution of the optimal cost and the distance from the best known optimal controller is plotted Figure~\ref{fig:expeg}. Notice that all sub-optimal local trajectories terminate after a modest damping $\alpha\approx0.2$. After that, the minimization algorithm always tracks a single trajectory. This illustrates the prediction of Corollary~\ref{cor:singlebigd}. Especially, if we start tracking a sub-optimal controller trajectory from $\alpha=0$, we will be on the better trajectory when $\alpha\approx 0.2$. At that time, if we gradually decrease $\alpha$ to zero, we obtain a stabilizing controller with a lower cost. 

\subsection{Experiments on Random Systems}
With the same initialization and optimization procedure, we perform the experiments with $3$-by-$3$ system matrices $A$ and $B$ randomly generated from the distribution $N(0,1)$. For 92 out of 100 samples we are not able to find more than one locally optimal trajectory. Examples with more than one local trajectories are listed below. All figures to the left plot the cost of locally optimal controllers. All figures to the right plot the distance of the locally optimal controllers to the controller with the lowest cost.  Note that the order of the cost of the trajectories may be preserved during the damping (Figure~\ref{fig:random1}) and may also be disrupted (Figure~\ref{fig:random2}). More than one trajectory may have the lowest cost during the damping (Figure~\ref{fig:random3}). 

\begin{figure}[htbp]
  \centering
  \includegraphics[width=0.4\textwidth]{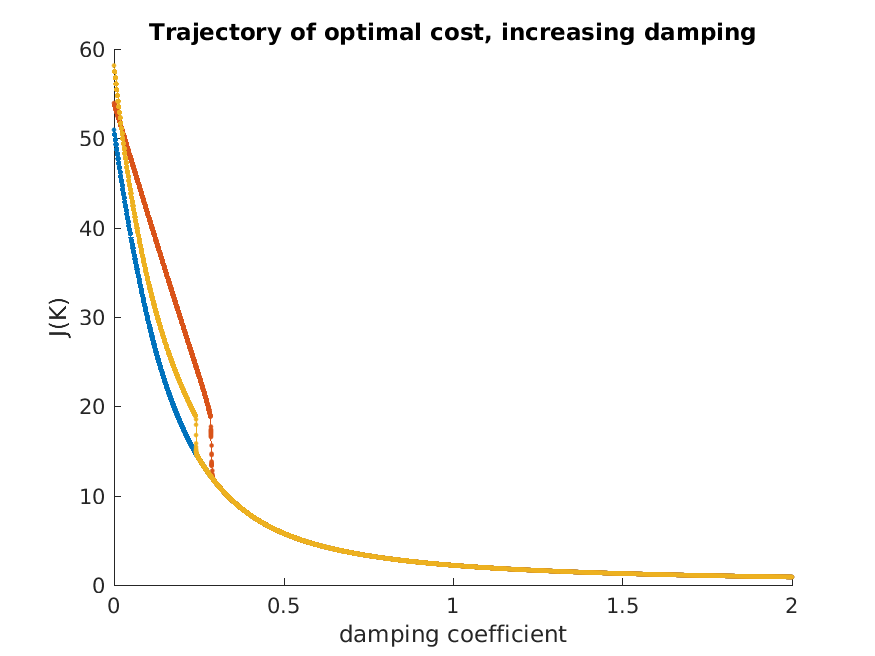}
  \includegraphics[width=0.4\textwidth]{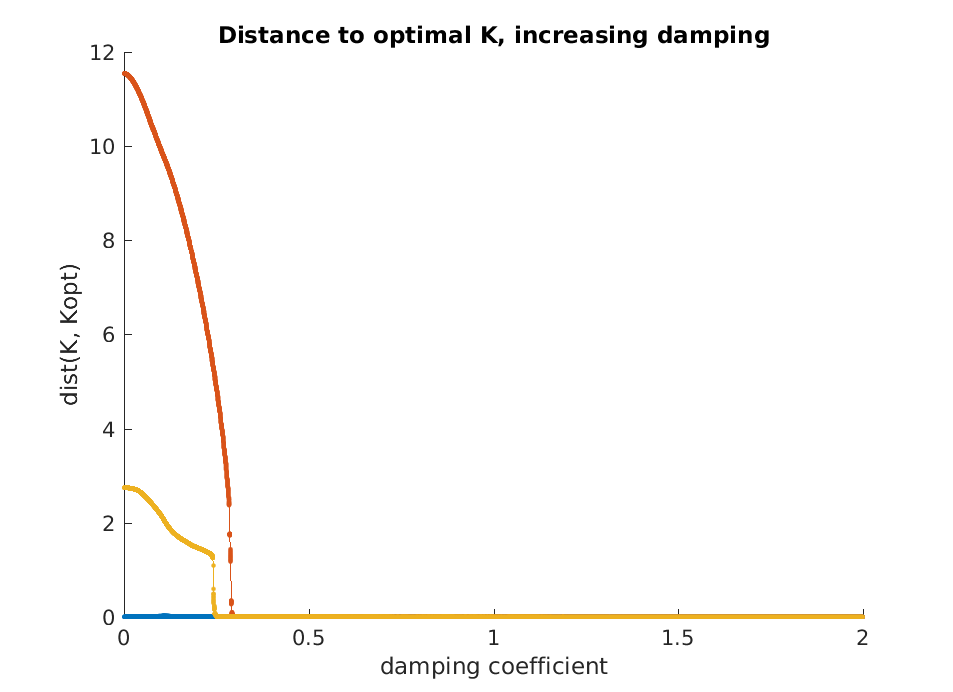}
  \caption{\label{fig:random1}Trajectory of a randomly generated system where the order of locally optimal controller is preserved.}
\end{figure}  
\begin{figure}[htbp]
  \centering
  \includegraphics[width=0.4\textwidth]{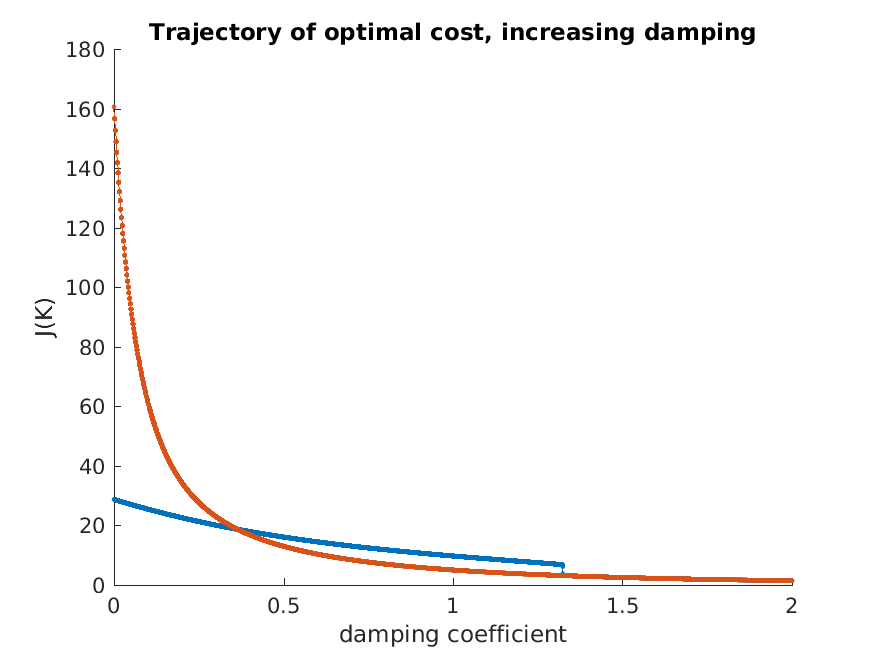}
  \includegraphics[width=0.4\textwidth]{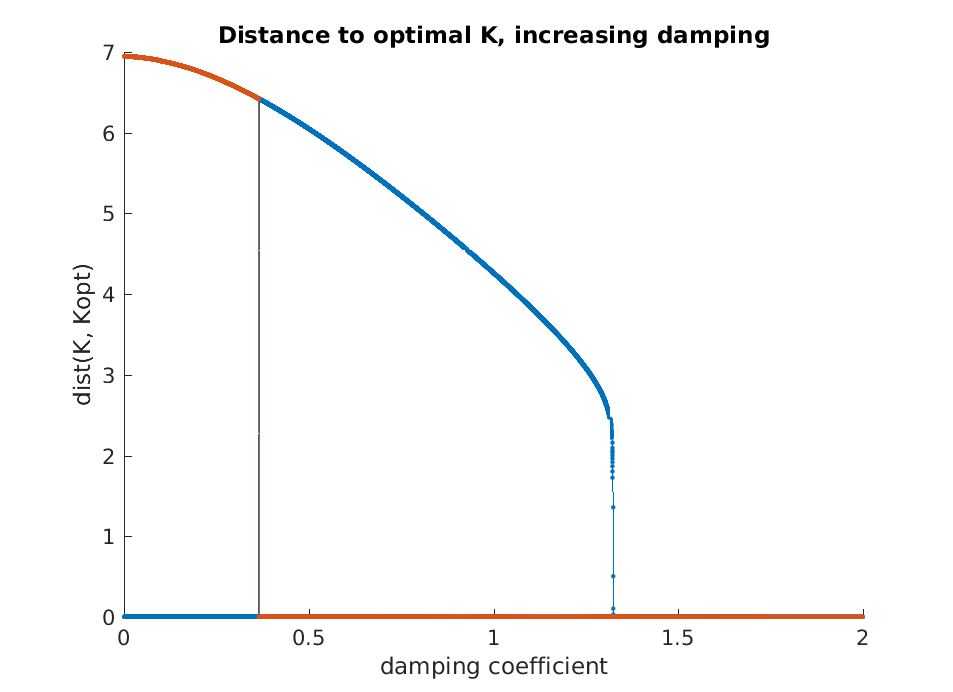}
  \caption{\label{fig:random2}Trajectories of a randomly generated system where the order of locally optimal controller is disrupted.}
\end{figure} 
\begin{figure}[htbp]
  \centering
  \includegraphics[width=0.4\textwidth]{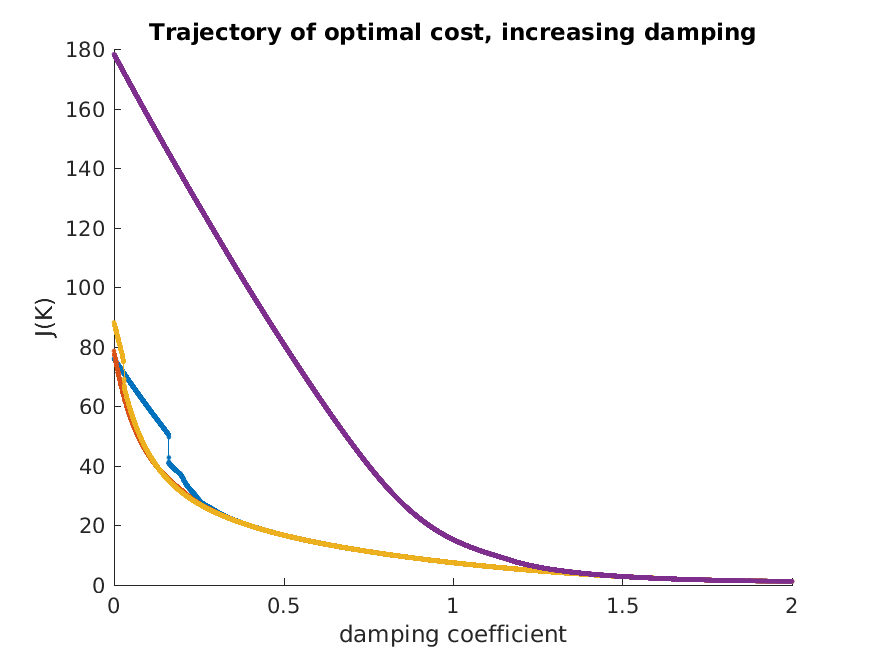}
  \includegraphics[width=0.4\textwidth]{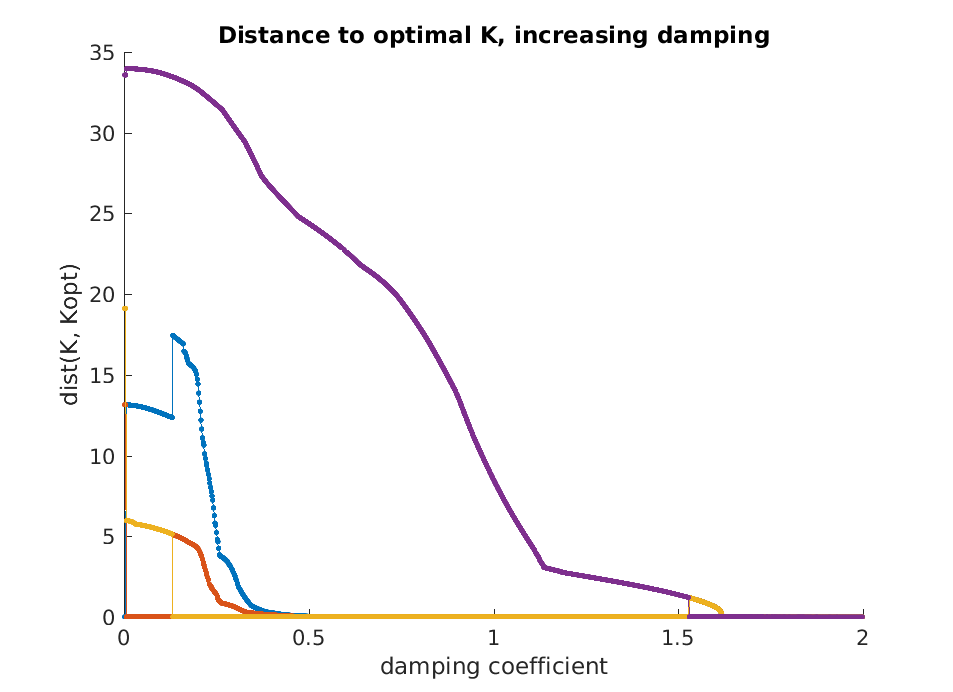}
  \caption{\label{fig:random3}Trajectory of a randomly generated system with a complicated behavior.}
\end{figure}   

Figure~\ref{fig:hysteresis} shows a hysteresis-like loop as the damping coefficient is first decreased and then increased. The trajectory of the controller first leads up to large cost and, the local search method escapes this local minimum to another one with a smaller cost. As the damping decreases, 
it returns where it starts along a different route. 
\begin{figure}[htbp]
    \centering
    \includegraphics[width=0.5\linewidth]{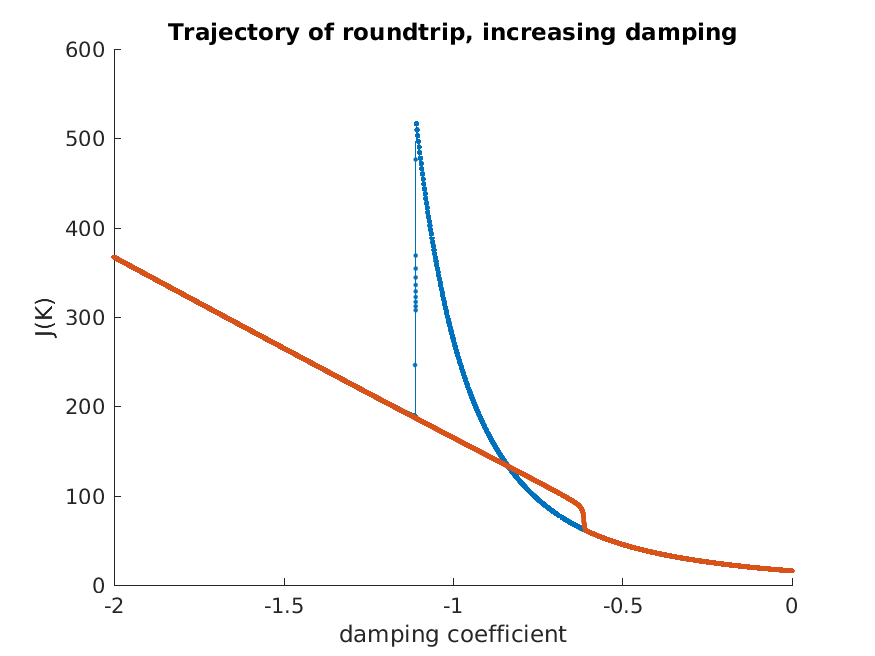}
    \caption{\label{fig:hysteresis}First decrease damping and then increase damping.}
\end{figure}

\section{Conclusion}\label{sec:conclusion}

This paper studied the trajectory of locally and globally optimal solution to the optimal decentralized control problem  as the damping of the decentralized control system varies. Asymptotic and continuity properties of trajectories are proved. The complicated phenomenon of continuation is illustrated with numerical examples. The fact that damping merges all locally optimal solutions is strong evidence that the idea of homotopy can be fruitfully used to improve locally optimal solutions. 

\subsubsection*{Acknowledgments}

The authors are grateful to Salar Fattahi and C\' edric Josz for their constructive comments and feedback. The author thanks Yuhao Ding for sharing the implementation of local search algorithms. 


\medskip

\small

\bibliographystyle{plain}
\bibliography{ZoteroLibrary} 

\appendix 
\section{Notions of continuity}
We recount the notion of upper and lower hemi-continuity and prove the continuity properties of the lower level-set map. The reader is referred to \cite{okRealAnalysisEconomic2007} for an accessible treatment. 

\begin{definition}
  The set valued map $\Gamma: A\to B$ is said to be upper hemi-continuous (uhc) at a point $a$ if for any open neighborhood $V$ of $\Gamma(a)$ there exists a neighborhood $U$ of $a$ such that $\Gamma(U)\subseteq V$. 
\end{definition}
If $B$ is compact, uhc is equivalent to the graph of $\Gamma$ being closed, that is, if $a_n\to a^*$ and $b_n \in \Gamma(a_n) \to b^*$, then $b^*\in \Gamma(a^*)$. 

\begin{definition}
  The set valued map $\Gamma: A\to B$ is said to be lower hemi-continuous (lhs) at a point $a$ if for any open neighborhood $V$ intersecting $\Gamma(a)$ there exists a neighborhood $U$ of $a$ such that $\Gamma(x)$ intersects $V$ for all $x\in U$. 
\end{definition}
Equivalently, for all $a_m\to a\in A$ and $b\in \Gamma(a)$, there exists $a_{m_k}$ subsequence of $a_m$ and a corresponding $b_k \in \Gamma(a_{m_k})$, such that $b_k \to b$. 

We prove the upper hemi-continuity of the lower level set map in Lemma~\ref{lem:lowerlevel} below. 
\begin{lemma}\label{lem:lowerlevel}
Given matrices $A, B$ and the objective cost $J(K, \alpha)$ that satisfies the damping property. Define 
  \begin{align*}
  \Gamma_M(\alpha) = \{K\in S: A-\alpha I + BK \text{ is stable and } J(K, \alpha) \leq M\}. 
  \end{align*}
Assume that $\Gamma_M(\alpha)$ is not empty for all $\alpha\geq0$ and a given $M>0$, then $\Gamma_M(\alpha)$ is an upper hemi-continuous set-valued map. 
\end{lemma}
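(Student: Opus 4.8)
The plan is to reduce the claim to the closed-graph characterization of upper hemi-continuity recorded just after the definition in this appendix: once I exhibit a neighborhood of an arbitrary target point $\alpha_0$ whose image under $\Gamma_M$ lands inside a single fixed compact set, uhc becomes equivalent to the graph being closed. Accordingly the argument splits into a local boundedness step that produces such a compact set and a closed-graph step, after which the standard fact ``closed graph plus local boundedness implies uhc'' finishes the proof.

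For local boundedness, the key observation is that the damping property nests the feasible sets. Indeed, if $K\in \Gamma_M(\alpha)$ and $\beta \geq \alpha$, then Lemma~\ref{def:damping} gives that $A-\beta I + BK$ is stable with $J(K,\beta) \leq J(K,\alpha) \leq M$, so $K\in \Gamma_M(\beta)$; that is, $\Gamma_M(\alpha)\subseteq \Gamma_M(\beta)$ whenever $\alpha \leq \beta$. Fixing $\alpha_0$ and any $\delta>0$, this monotonicity yields $\bigcup_{\alpha\in[\alpha_0-\delta,\,\alpha_0+\delta]} \Gamma_M(\alpha) = \Gamma_M(\alpha_0+\delta)$, which is compact because the lower level set of $J(\cdot,\alpha_0+\delta)$ is compact~\cite{toivonenGloballyConvergentAlgorithm1985}. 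Thus $\Gamma_M$ sends the whole interval into one fixed compact set, and on that interval uhc is equivalent to the graph being closed.

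For the closed-graph step I would take $\alpha_n\to\alpha_0$ and $K_n\in\Gamma_M(\alpha_n)$; local boundedness forces a convergent subsequence, and after relabeling I may assume $K_n\to K^*$, with the goal $K^*\in\Gamma_M(\alpha_0)$. Since eigenvalues depend continuously on the matrix entries and $A-\alpha_n I + BK_n \to A-\alpha_0 I + BK^*$, no eigenvalue of the limit can lie in the open right half plane, for otherwise the stable matrices $A-\alpha_n I + BK_n$ would inherit one for large $n$. The remaining danger is that the limit is only marginally stable, with an eigenvalue on the imaginary axis.

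This marginal case is the main obstacle, and I expect to dispatch it using the barrier behavior of the cost: as a closed-loop matrix approaches the boundary of the stability region the Lyapunov solution $P_\alpha$ blows up and $J\to\infty$. Hence if $A-\alpha_0 I + BK^*$ had an eigenvalue on the imaginary axis, then $J(K_n,\alpha_n)\to\infty$, contradicting $J(K_n,\alpha_n)\leq M$. Therefore $K^*$ is strictly stabilizing, joint continuity of $J$ gives $J(K^*,\alpha_0)=\lim_n J(K_n,\alpha_n)\leq M$, and so $K^*\in\Gamma_M(\alpha_0)$. Combined with the local boundedness step, the closed-graph property upgrades to upper hemi-continuity at $\alpha_0$; since $\alpha_0$ was arbitrary, $\Gamma_M$ is upper hemi-continuous.
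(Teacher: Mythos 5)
Your proposal is correct and follows essentially the same route as the paper's proof: use the damping monotonicity $\Gamma_M(\alpha)\subseteq\Gamma_M(\beta)$ for $\alpha\leq\beta$ to trap the map locally inside the compact set $\Gamma_M(\alpha_0+\delta)$, then verify the closed-graph/sequence characterization of upper hemi-continuity, with boundedness of the cost ruling out a marginally stable limit. Your treatment of the imaginary-axis case is in fact slightly more explicit than the paper's one-line ``the fact that the cost is bounded implies stability,'' and the only detail you omit is the trivial observation that $K^*\in\mathcal{S}$ because subspaces are closed.
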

\begin{proof}
  From~\cite{toivonenGloballyConvergentAlgorithm1985}, $\Gamma_M(\alpha)$ is compact for all $\alpha$. From the damping property, for any $\alpha < \beta$, we have $\Gamma_M(\alpha)\subseteq \Gamma_M(\beta)$.  Therefore, to characterize the continuity of $\Gamma$ at a $\alpha^*\geq0$, it suffices to consider the restricted map $\Gamma_M: [\alpha^*-\epsilon, \alpha^*+ \epsilon] \to \Gamma_M(\alpha^* + \epsilon)$ for some $\epsilon>0$, that is, to consider the range of $\Gamma_M$ to be compact. Therefore, the sequence characterization of uhc applies. Suppose $\alpha_i \to \alpha^*$, pick a sequence of $K_i \in \Gamma_M(\alpha_i)$ that converges to $K^*$. The continuity of $J(K, \alpha)$ implies  $J(K^*, \alpha^*) \leq M$. The fact that the cost is bounded implies $A-\alpha^* I + BK$ is stable. Since subspaces of matrices are closed, $K^*\in \mathcal{S}$. We have verified all conditions for $K^* \in \Gamma(\alpha^*)$, so $\Gamma_M$ is upper hemi-continuous. 
\end{proof}
The lower hemi-continuity of $\Gamma_M$ is more subtle. 
\begin{lemma}\label{lem:lowerlevellhc}
  At any given $\alpha^* \geq0$, $\Gamma_M(\alpha)$ is lower hemi-continuous at $\alpha^*$ except when $M\in \{J(K, \alpha^*): K\in K^\dagger(\alpha^*)\}$, which is a finite set of locally optimal costs. 
\end{lemma}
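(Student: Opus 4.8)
The plan is to reduce lower hemi-continuity to a closure condition on the strict sublevel set, then identify the obstructing controllers as local minimizers, and finally invoke semialgebraic geometry to count the exceptional values. First I would introduce the open strict sublevel set
\[
  \Gamma_M^<(\alpha) = \{K\in \mathcal{S}: A - \alpha I + BK \text{ is stable, } J(K,\alpha) < M\},
\]
and prove the following reduction: if every $K_0\in \Gamma_M(\alpha^*)$ lies in the closure $\overline{\Gamma_M^<(\alpha^*)}$, then $\Gamma_M$ is lower hemi-continuous at $\alpha^*$. To see this, fix such a $K_0$ and a sequence $\hat K_l \to K_0$ with $\hat K_l \in \Gamma_M^<(\alpha^*)$. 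Because stability is an open condition whose defining eigenvalues move continuously with $\alpha$, and because $J(\hat K_l,\cdot)$ is continuous while the inequality $J(\hat K_l,\alpha^*)<M$ is strict, each $\hat K_l$ remains in $\Gamma_M(\alpha)$ for all $\alpha$ in some two-sided interval $(\alpha^*-\delta_l,\alpha^*+\delta_l)$. Given an arbitrary $\alpha_i\to\alpha^*$, a diagonal selection then yields $K_i\in\Gamma_M(\alpha_i)$ with $K_i\to K_0$, which is exactly the sequential criterion for lower hemi-continuity recalled earlier. This treats both directions of approach at once and needs only the continuity of $J$ and openness of stability, not the monotonicity from Lemma~\ref{def:damping}.

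Next I would determine when the closure condition fails. If $J(K_0,\alpha^*)<M$ then $K_0\in\Gamma_M^<(\alpha^*)$ already, so only boundary controllers with $J(K_0,\alpha^*)=M$ can obstruct. Such a $K_0$ fails to lie in $\overline{\Gamma_M^<(\alpha^*)}$ exactly when some neighborhood of $K_0$ in the stabilizing part of $\mathcal{S}$ contains no controller of cost strictly below $M$; by definition this says $K_0$ is a local minimizer of $J(\cdot,\alpha^*)$ over $\mathcal{S}$ with value $M$, i.e. $K_0\in K^\dagger(\alpha^*)$. A boundary controller that is \emph{not} a local minimizer has points of strictly smaller cost in every neighborhood and hence lies in the closure. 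Therefore lower hemi-continuity holds at $\alpha^*$ for every $M\notin\{J(K,\alpha^*):K\in K^\dagger(\alpha^*)\}$, while a genuine local-minimum value necessarily breaks it. Saddle points and local maxima, where a descent or negative-curvature direction is available, do not obstruct the closure condition, which is precisely why only locally \emph{optimal} costs are excluded.

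Finally I would show the exceptional set is finite. The stationarity condition \eqref{eq:stationary}, together with the Lyapunov equations \eqref{eq:fonp}--\eqref{eq:fonl} (which are linear in $P_{\alpha^*}$ and $L_{\alpha^*}$ with polynomial coefficients in $K$), expresses $P_{\alpha^*}(K)$ and hence the cost $J(K,\alpha^*)=\tr(D_0 P_{\alpha^*}(K))$ from \eqref{eq:jpk} as a rational function of $K$, and confines $K^\dagger(\alpha^*)$ to a semialgebraic set. By the Tarski--Seidenberg theorem the image $\{J(K,\alpha^*):K\in K^\dagger(\alpha^*)\}$ is then a semialgebraic subset of $\bR$, hence a finite union of points and open intervals. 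Each such value is a critical value of the smooth restriction of $J(\cdot,\alpha^*)$ to the open stabilizing subset of $\mathcal{S}$, so by Sard's theorem the set has Lebesgue measure zero; a measure-zero semialgebraic subset of $\bR$ contains no interval and is therefore finite.

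I expect the finiteness step to be the main obstacle, since it cannot be settled by elementary continuity arguments and instead rests on pairing the algebraic form of the first-order conditions with a Sard-type statement; the cleanest route is the semialgebraic-plus-measure-zero dichotomy rather than any explicit enumeration of stationary points. A secondary delicacy, already visible in the first step, is arranging the diagonal argument so that strict cost feasibility and stability are preserved simultaneously as $\alpha_i$ approaches $\alpha^*$ from either side.
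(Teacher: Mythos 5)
Your proposal is correct and follows essentially the same route as the paper: the paper likewise shows (by a direct contradiction/sequence argument rather than your closure-of-the-strict-sublevel-set reduction, but using the same ingredients of openness of stability, continuity of $J$, and strictness of the inequality) that any obstruction to lower hemi-continuity must be a controller with $J(K_0,\alpha^*)=M$ that is a local minimizer, and then appeals to the algebraic structure of the first-order conditions for finiteness. The only substantive difference is that the paper disposes of the finiteness step in one line (``$J(\cdot,\alpha^*)$ is a linear function over an algebraic set, so the set of local-minimum values is finite'') whereas you supply the missing detail via Tarski--Seidenberg plus Sard; your version of that step is the more complete of the two.
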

\begin{proof}
  Prove by contradiction, consider a sequence $\alpha_i\to \alpha^*$ and a $K^*\in \Gamma(\alpha^*)$, but there exists no subsequence of $\alpha_i$ and $K_i\in \Gamma(\alpha_i)$ such that $K_i\to K^*$. 
  We must have $J(K^*, \alpha^*)=M$ --- otherwise $J(K^*, \alpha_i)<M$ for large $i$ and, since the set of stabilizing controllers is open, $K^*\in \Gamma_M(\alpha_i)$ for large $i$. Furthermore, $K^*$ must be a local minimum of $J(K, \alpha^*)$ --- otherwise there exists a sequence $K_j \to K^*$ with $J(K_j, \alpha^*)<M$ and, by the continuity of $J$, there exists as sequence of large enough indices $n_j$ such that $J(K_j, \alpha_{n_j})<M$; the sequence $K_j \in \Gamma_M(\alpha_{n_j})$ converges to $K^*$. The argument above suggests that  $M$ belongs to the cost locally optimal controllers at $\alpha^*$. Because $J(K, \alpha^*)$ as a function over $K$ can be described as a linear function over an algebraic set, the value of local minimum is finite. 
\end{proof}


\section{Convergence of locally optimal controllers}
We prove the asymptotic properties of the locally optimal controllers in Section 4 of the main paper. 
\begin{theorem*}
  Suppose the sparsity pattern $I_S$ is block-diagonal with square blocks, and $R$ has the same sparsity pattern as $I_S$. Then all points in $K^\dagger(\alpha)$ converges to the zero matrix as $\alpha \to \infty$. Furthermore, $J(K, \alpha)\to 0$ as $\alpha\to\infty$ for all $K\in K^\dagger(\alpha)$. 
\end{theorem*}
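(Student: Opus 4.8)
The plan is to reduce the statement to the analysis of an arbitrary sequence $K_j \in K^\dagger(\alpha_j)$ with $\alpha_j \to \infty$ and to show $K_j \to 0$; this suffices, since any failure of the supremal norm to vanish would produce such a sequence bounded away from $0$, and the cost claim will then follow from the closed form \eqref{eq:jpk}. The engine is a rescaling of the two Lyapunov equations. Writing the closed loop as $(A+BK)-\alpha I$ and isolating the $\alpha$ term, \eqref{eq:fonp}--\eqref{eq:fonl} are equivalent to the fixed-point forms
\[ 2\alpha P_\alpha(K) = K^\top R K + Q + (A+BK)^\top P_\alpha(K) + P_\alpha(K)(A+BK), \]
\[ 2\alpha L_\alpha(K) = D_0 + (A+BK) L_\alpha(K) + L_\alpha(K)(A+BK)^\top. \]
First I would record that, uniformly over $K$ in any bounded set, $\|P_\alpha(K)\|$ and $\|L_\alpha(K)\|$ are $O(1/\alpha)$ (bound the right-hand sides and absorb the correction terms once $\alpha > \|A+BK\|$), and hence $2\alpha P_\alpha(K) \to K^\top R K + Q$ and $2\alpha L_\alpha(K)\to D_0$.

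Next, the bounded case. Suppose along a subsequence $K_j \to K^\infty$. Multiplying the stationarity condition \eqref{eq:stationary} by $2\alpha_j$ and passing to the limit --- using $R K_j \to R K^\infty$, $B^\top P_{\alpha_j}(K_j) \to 0$, and $2\alpha_j L_{\alpha_j}(K_j) \to D_0$ --- yields $(R K^\infty D_0)\circ I_\mathcal{S} = 0$. Pairing this with $K^\infty \in \mathcal{S}$ and using that the Hadamard projection onto $\mathcal{S}$ is self-adjoint gives $\tr(D_0 (K^\infty)^\top R K^\infty) = \|R^{1/2} K^\infty D_0^{1/2}\|^2 = 0$, so $K^\infty = 0$ because $R\succ 0$ and $D_0 \succ 0$. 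Thus every convergent subsequence tends to $0$, and a bounded sequence converges to $0$. (This step uses only $R,D_0\succ0$, not the block structure.) The cost then follows cheaply: once $\|K_j\|$ is bounded, the $O(1/\alpha)$ estimate gives $P_{\alpha_j}(K_j)\to 0$, so $J(K_j,\alpha_j)=\tr(D_0 P_{\alpha_j}(K_j))\to 0$.

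\textbf{The main obstacle} is to rule out an escaping sequence with $\|K_j\|\to\infty$, and this is exactly where I expect the block-diagonal hypothesis on $I_\mathcal{S}$ and the matching pattern of $R$ to be indispensable. The natural tool is the stationarity energy identity obtained by pairing \eqref{eq:stationary} with $K$,
\[ \tr\!\big(K^\top R K L_\alpha(K)\big) = -\tr\!\big((BK)^\top P_\alpha(K) L_\alpha(K)\big), \]
whose left side is bounded below by $\lambda_{\min}(R)\,\lambda_{\min}(L_\alpha(K))\,\|K\|^2$. Combined with the rescaled bounds this already excludes the intermediate range $1 \lesssim \|K\| \lesssim c\,\alpha$, where the right side is of strictly lower order than the left and forces a contradiction. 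The genuinely delicate regime is $\|K\|\gtrsim\alpha$, where the damping no longer dominates $A+BK$, the Gramian estimates degrade ($\lambda_{\min}(L_\alpha)\to 0$), and the rescaling is unavailable. There I would instead argue that such large structured controllers cannot be stationary, either by exhibiting an explicit descent direction inside $\mathcal{S}$ or by a coercivity estimate for $J(\cdot,\alpha)$ that is uniform in $\alpha$ and that leans on the block-diagonal alignment of $K$, $R$, and $K^\top R K$. Establishing this uniform a priori bound $\|K\|\le C$ is the crux; granting it, the bounded-case analysis above closes both assertions of the theorem.
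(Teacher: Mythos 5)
Your bounded-case analysis is correct (and arguably cleaner than the paper's treatment of that regime), but the proposal has a genuine gap at precisely the point you yourself flag as the crux: nothing in it rules out a sequence $K_j\in K^\dagger(\alpha_j)$ whose norm grows with $\alpha_j$, and the two routes you sketch for that regime (an explicit descent direction inside $\mathcal{S}$, or a uniform coercivity estimate) are not carried out and are not obviously available. The paper sidesteps this obstacle by proving the two assertions in the opposite order. It first establishes $J(K,\alpha)\to 0$ uniformly over $K\in K^\dagger(\alpha)$ \emph{without any boundedness assumption on $K$}: because the first-order conditions \eqref{eq:fonp}--\eqref{eq:sparsity} are algebraic, new local solutions can appear only finitely often, so for $\alpha\ge\alpha_0$ the locally optimal controllers form a fixed finite family; the damping property of Lemma~\ref{def:damping} then gives $\max_{K\in K^\dagger(\beta)}J(K,\beta)\le\max_{K\in K^\dagger(\alpha_0)}J(K,\beta)$, and the right-hand side tends to $0$ by dominated convergence. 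Via $J=\tr(D_0P_\alpha(K))$ and $D_0\succ0$ this yields $\|P_\alpha(K)\|\to 0$ for \emph{all} stationary $K$, large or small --- a conclusion your rescaling only delivers on bounded sets.

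With $\|P_\alpha\|\to0$ in hand, the paper then uses the block-diagonal hypothesis in a way your proposal never exploits: for block-diagonal $I_\mathcal{S}$ with square blocks and $R$ of the same pattern, the stationarity condition \eqref{eq:stationary} can be \emph{solved} for $K$,
\[ K=-R^{-1}\bigl((B^\top P_\alpha(K)L_\alpha(K))\circ I_\mathcal{S}\bigr)\bigl(L_\alpha(K)\circ I_\mathcal{S}\bigr)^{-1}, \]
which gives the self-referential bound $\|BK\|\le\|BR^{-1}B^\top P_\alpha L_\alpha\|\,\lambda_{\min}(L_\alpha)^{-1}$. Feeding this back into the eigenvector estimate for $\lambda_{\min}(L_\alpha)$ and the trace estimate for $\|L_\alpha\|$ controls the conditioning of $L_\alpha$ a priori, i.e., without first assuming $\|K\|$ bounded, and then $\|K\|\to0$ follows from $\|P_\alpha\|\to0$. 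Your energy identity $\tr(K^\top RKL_\alpha)=-\tr((BK)^\top P_\alpha L_\alpha)$ is essentially the un-inverted form of the same relation, but by itself it cannot control $\|L_\alpha\|/\lambda_{\min}(L_\alpha)$ once $\|BK\|$ is comparable to $\alpha$, which is exactly where your argument stalls. To repair the proposal you need both missing ingredients: the algebraicity-plus-damping argument that gives $\|P_\alpha\|\to0$ unconditionally, and the explicit inversion of \eqref{eq:stationary} permitted by the block structure.
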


\begin{proof}
Recall the expression of the objective function
\begin{equation}
\begin{aligned}
    J(K, \alpha) = & \bE \int_0^\infty \left[e^{-2\alpha t}\left(  \hat x^\top (t) Q \hat x(t) + \hat u^\top  (t) R \hat u(t) \right)\right] dt \\ 
  s.t.  \quad   & \hat{\dot {x}}(t) =  A \hat x(t) + B \hat u(t) \\ 
  & \hat u(t) = K \hat x(t), 
\end{aligned}\label{eq:appendix-damped-rep}
\end{equation}
and the first order necessary conditions
\begin{align}
& (A - \alpha I + BK)^\top  P_\alpha(K) +P_\alpha(K) (A - \alpha I + BK) + K^\top  RK + Q = 0 \label{eq:appendix-fonp}\\
& L_\alpha(K)(A-\alpha I  + BK)^\top  + (A-\alpha I +BK)L_\alpha(K) + D_0 = 0 \label{eq:appendix-fonl} \\ 
&  ((B^\top P_{\alpha}(K) + RK) L_\alpha(K))\circ I_S= 0\label{eq:appendix-stationary}\\
& K \circ I_\mathcal{S}=K \label{eq:appendix-sparsity}. 
\end{align}
Those first order conditions can be used to characterize the objective function 
\begin{equation}\label{eq:appendix-jpk}
  J(K, \alpha) = \tr(D_0 P_\alpha(K)). 
\end{equation}
  As $\alpha$ increases, some local solution may disappear, some new local solution may appear. The appearance cannot happen infinitely often because the equations \eqref{eq:appendix-fonp}-\eqref{eq:appendix-sparsity} are algebraic. Suppose when $\alpha\geq \alpha_0$, the number of local solutions does not change. The damping property ensures for $\beta > \alpha > \alpha_0$, 
  \begin{align*}
  \max_{K\in K^\dagger(\beta)} J(K, \beta) \leq \max_{K\in K^\dagger(\alpha)}J(K, \beta)
  \end{align*}
  The right hand side optimizes over a fixed, finite set of controllers and goes to zero as $\beta\to\infty$ from the formulation \eqref{eq:appendix-damped-rep} and the dominated convergence theorem. The left hand side, therefore, also converges to zero as $\beta\to\infty$. From \eqref{eq:appendix-jpk} and the assumption that $D_0$ is positive definite, $\|P_{\beta}(K)\|\to 0$ for all $K\in K^\dagger(\beta)$ as $\beta\to\infty$. 

  The assumption on sparsity allows the expression of the locally optimal controllers in \eqref{eq:appendix-stationary} as 
   \[K = -R^{-1}((B^\top  P_\alpha(K) L_\alpha(K)) \circ I_S) (L_\alpha(K)\circ I_S)^{-1}.\] Especially we can bound 
  \begin{align*}
   \|BK\| \leq \|BR^{-1}B^\top  P_\alpha(K) L_\alpha(K) \| \lambda_{\min} (L_\alpha(K))^{-1}. 
  \end{align*}
  Pre- and post- multiply \eqref{eq:appendix-fonl} by $L_\alpha(K)$'s unit minimum eigenvector $v$,  
  \begin{align}
  \lambda_{\min}(L_\alpha(K))(2a - 2 v^\top (A+BK)v) = v^\top D_0 v. 
  \end{align} 
  Therefore
  \begin{align}
  \lambda_{\min}(L_\alpha(K)) & \geq \frac{\lambda_{\min}(D_0)}{2 \alpha + 2 \|A + BK\|} \\ 
   & \geq \frac{\lambda_{\min}(D_0)}{2 \alpha + 2 \|A\| + 2\|BK\|}  \\ 
   & \geq \frac{\lambda_{\min}(D_0)}{2 \alpha + 2 \|A\| + 2 \|BR^{-1}B^\top  P_\alpha(K) L_\alpha(K) \| \lambda_{\min} (L_\alpha(K))^{-1}. }\label{eq:labelow}
  \end{align}
  This simplifies to 
  \begin{align}\label{eq:lmin}
  \lambda_{\min}(L_\alpha(K)) \geq \frac{\lambda_{\min}(D_0) - 2 \|BR^{-1}B^\top  P_\alpha(K) L_\alpha(K) \|}{(2 \alpha + 2 \|A\|)}
  \end{align}
  Take the trace of \eqref{eq:appendix-fonl} and consider the estimate
  \begin{align*} 
  2n \|A\|\|L_\alpha\| + \tr(D_0) & \geq 2 \|A\| \tr(L_\alpha) + \tr (D_0) \\ 
   & \geq 2 \alpha \tr(L_\alpha) + 2 \tr(BR^{-1}((B^\top  P_\alpha L_\alpha) \circ I_S) (L_\alpha\circ I_S)^{-1}L_\alpha) \\ 
  & \geq 2\alpha \tr(L_\alpha) - 2 \|BR^{-1}((B^\top  P_\alpha L_\alpha) \circ I_S)\| \tr((L_\alpha\circ I_S)^{-1}L_\alpha) \\ 
  & = 2\alpha \tr(L_\alpha) - 2 \|BR^{-1}((B^\top  P_\alpha L_\alpha) \circ I_S)\| n \\ 
  & \geq 2\alpha \|L\| - 2n \|BR^{-1}\|\|B^\top\|\|  \|P_\alpha\|\| L_\alpha \|, 
  \end{align*}
  where for clarity $L_\alpha$ denotes $L_\alpha(K)$ and $P_\alpha$ denotes $P_\alpha(K)$. The second and the third inequalities use the fact that $|\tr(AL)|\leq \|A\| \tr(L)$ for a positive definite matrix $L$ and any matrix $A$. This estimate, combined with previous argument that $\|P_\alpha\|\to 0$, concludes $\|L_\alpha\|\to 0$. We also obtain from the inequality that 
  \begin{align}\label{eq:lmax}
  \|L_\alpha\| \leq \frac{\tr(D_0)}{2a - 2n \|A\| - 2n \|BR^{-1}\|\|B^\top\|\|  \|P_\alpha\|}, 
  \end{align}
  for small enough $P_\alpha$. 
  Combining \eqref{eq:lmin} and \eqref{eq:lmax}
  \begin{align*}
  \|K\| 
  & \leq  \|R^{-1}\|\cdot \|(B^\top  P_\alpha L_\alpha) \circ I_S \|\cdot\| (L_\alpha\circ I_S)^{-1}\| \\ 
  & \leq \|R^{-1}\|\cdot \|B^\top  \| \cdot \| P_\alpha\| \cdot \| L_\alpha\|\cdot\|\lambda_{\min}(P_\alpha)^{-1} \\ 
  & \leq  \|R^{-1}\|\cdot \|B^\top  \| \cdot \| P_\alpha\| \frac{\tr(D_0)}{2\alpha - 2n \|A\| - 2n \|BR^{-1}\|\|B^\top\|\|  \|P_\alpha\|} \frac{(2 \alpha + 2 \|A\|)}{\lambda_{\min}(D_0) - 2 \|BR^{-1}B^\top  P_\alpha L_\alpha\|}, 
  \end{align*}
  which converges to $0$ as $\alpha\to \infty$. 
\end{proof}

\section{The Positive Definiteness of Hessian}
\begin{theorem*}
For any given $r>0$, the Hessian matrix $\nabla^2 J(K, \alpha)$ is positive definite over $\|K\| \leq r$ for all large $\alpha$. 
\end{theorem*}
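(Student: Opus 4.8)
The plan is to exploit the integral (Laplace-transform) representation of the cost coming from formulation \eqref{eq:damped-rep} and to extract the dominant term of the Hessian as $\alpha\to\infty$. Writing the closed-loop trajectory of \eqref{eq:damped-rep} as $\hat x(t)=e^{(A+BK)t}x_0$ and taking the expectation over $x_0$, the cost becomes
\[
 J(K,\alpha)=\int_0^\infty e^{-2\alpha t} M(t,K)\,dt,\qquad M(t,K):=\tr\!\big(D_0\, e^{(A+BK)^\top t}(Q+K^\top R K)e^{(A+BK)t}\big).
\]
For a fixed ball $\|K\|\le r$ set $\gamma:=\|A\|+r\|B\|$, so that $\|e^{(A+BK)t}\|\le e^{\gamma t}$ and the integral converges for $\alpha>\gamma$; the same bound makes $M(\cdot,K)$ and all of its $K$-derivatives dominated by a (polynomial in $t$)$\times e^{2\gamma t}$ envelope, which justifies differentiating under the integral sign, giving $\nabla^2_K J(K,\alpha)=\int_0^\infty e^{-2\alpha t}\,\nabla^2_K M(t,K)\,dt$.

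First I would isolate the leading term by evaluating the Hessian of the integrand at $t=0$. Since $e^{(A+BK)\cdot 0}=I$, we have $M(0,K)=\tr(D_0(Q+K^\top RK))$; the $Q$ part is constant in $K$ and the quadratic part gives, in vectorized coordinates, $\nabla^2_K\,\tr(D_0K^\top RK)=2\,(D_0\otimes R)$. Because $D_0\succ0$ and $R\succ0$, this is positive definite with $\lambda_{\min}\ge 2\lambda_{\min}(D_0)\lambda_{\min}(R)$. Integrating this constant contribution yields $\int_0^\infty e^{-2\alpha t}\,2(D_0\otimes R)\,dt=\tfrac1\alpha(D_0\otimes R)$, which alone is positive definite with smallest eigenvalue at least $c/\alpha$, where $c:=\lambda_{\min}(D_0)\lambda_{\min}(R)>0$.

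The remaining work is to show the correction is of strictly smaller order, i.e.
\[
 \Big\|\nabla^2_K J(K,\alpha)-\tfrac1\alpha(D_0\otimes R)\Big\|=\Big\|\int_0^\infty e^{-2\alpha t}\big[\nabla^2_K M(t,K)-\nabla^2_K M(0,K)\big]dt\Big\|=O(1/\alpha^2),
\]
uniformly over $\|K\|\le r$. To do this I would differentiate $M(t,K)$ in $K$ using the Duhamel formula $\partial_{K_{ij}}e^{(A+BK)t}=\int_0^t e^{(A+BK)s}BE_{ij}e^{(A+BK)(t-s)}ds$, which produces at most two extra factors of $t$ after two differentiations, giving a uniform bound $\|\nabla^2_K M(t,K)\|\le C(1+t^2)e^{2\gamma t}$; applying the same estimates to $\partial_t\nabla^2_K M$ yields $\|\nabla^2_K M(t,K)-\nabla^2_K M(0,K)\|\le Lt$ on $[0,1]$ and a crude exponential bound $\le L'e^{3\gamma t}$ for all $t\ge0$. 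Splitting the integral at $t=1$ then gives $\int_0^1 e^{-2\alpha t}Lt\,dt\le L/(4\alpha^2)$ together with $\int_1^\infty e^{-2\alpha t}L'e^{3\gamma t}dt$, which is exponentially small in $\alpha$ once $\alpha>3\gamma/2$, establishing the claim. Combining the two displays, $\nabla^2_K J(K,\alpha)\succeq(c/\alpha-C'/\alpha^2)I\succ0$ for all large $\alpha$, uniformly over the ball and hence a fortiori when restricted to the subspace $\mathcal{S}$.

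The main obstacle is this last step: obtaining the uniform-in-$K$ Laplace-type estimate on the second difference of the integrand. The delicacy is that over $\|K\|\le r$ the matrix $A+BK$ need not be stable, so $M(t,K)$ and its derivatives can grow exponentially in $t$, and the argument only closes because the damping weight $e^{-2\alpha t}$ eventually dominates every such growth. This turns the computation into a Watson's-lemma estimate in which the behavior near $t=0$, namely $\nabla^2_K M(0,K)=2(D_0\otimes R)$, fixes the leading $1/\alpha$ order while all remaining contributions are pushed to $O(1/\alpha^2)$.
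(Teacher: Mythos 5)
Your proof is correct, and it takes a genuinely different route from the paper's. The paper starts from the closed-form vectorized Hessian of Rautert--Sachs, $H_\alpha(K)=2\{(L_\alpha(K)\otimes R)+G_\alpha(K)^\top+G_\alpha(K)\}$, lower-bounds the first term via the Lyapunov equation for $L_\alpha$ (giving $\lambda_{\min}(L_\alpha(K)\otimes R)\geq \lambda_{\min}(D_0)\lambda_{\min}(R)/(2\alpha+2\|A+BK\|)$, an order-$1/\alpha$ quantity), and shows $\|G_\alpha(K)\|\lesssim (2\alpha)^{-1}\|L_\alpha(K)\|=O(1/\alpha^2)$, so the Lyapunov term dominates. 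You instead work directly with the time-domain representation $J(K,\alpha)=\int_0^\infty e^{-2\alpha t}M(t,K)\,dt$, differentiate under the integral via Duhamel, and run a Watson's-lemma expansion: the value $\nabla_K^2 M(0,K)=2(D_0\otimes R)$ at $t=0$ produces the leading term $\tfrac1\alpha(D_0\otimes R)$ and everything else is $O(1/\alpha^2)$ uniformly on the ball. The two leading terms agree, since $L_\alpha(K)\approx D_0/(2\alpha)$ for large $\alpha$, so the decompositions are asymptotically the same object seen in the frequency versus time domain. What the paper's route buys is brevity given the citation to the Hessian formula, plus Lyapunov-equation bounds that are non-asymptotic and reusable elsewhere in the paper (the same $\lambda_{\min}(L_\alpha)$ estimate appears in the proof of Theorem~\ref{thm:converge-to-zero}); what your route buys is self-containedness (no external Hessian formula needed), a transparent identification of why $D_0\otimes R$ is the leading curvature, and in principle a full asymptotic expansion in powers of $1/\alpha$. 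Your closing remark that restriction to $\mathcal{S}$ preserves positive definiteness (as a principal submatrix) matches the paper's final observation. The only step worth tightening is the uniform Lipschitz bound $\|\nabla_K^2M(t,K)-\nabla_K^2M(0,K)\|\leq Lt$ on $[0,1]$: it does follow from joint smoothness of $M$ and compactness of $[0,1]\times\{\|K\|\leq r\}$, but you should say so explicitly rather than gesture at "the same estimates."
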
 

\begin{proof}
The proof requires the vectorized Hessian formula given in Lemma~3.7 of \cite{rautertComputationalDesignOptimal1997}, restated below. 
\begin{lemma}[\cite{rautertComputationalDesignOptimal1997}]
  Define $j_\alpha: \bR^{m\cdot n} \to \bR$ by $j_\alpha(vec(K)) = J(K, \alpha)$. The Hessian of $j_\alpha$ is given by the formula 
  \begin{align*}
  H_\alpha(K) = 2 \left\{ \left(L_\alpha(K)\otimes R\right) + G_\alpha(K)^\top + G_\alpha(K)\right\}, 
  \end{align*}
where
  \begin{align*}
  G_\alpha(K) = [I\otimes (B^TP_\alpha(K) + RK)]\left[I\otimes (A-\alpha I +BK) + (A-\alpha I +BK)\otimes I\right]^{-1} (I_{n,n} + P(n,n))[L_\alpha(K)\otimes B]
  \end{align*}
and $P(n,n)$ is an $n^2\times n^2$ permutation matrix. 
\end{lemma}

We show that $H_\alpha(K)$ in the lemma is positive definite for any fixed $K$ when $\alpha$ is large. Recall the definition of $L_\alpha$ and $K_\alpha$. 
\begin{align}
  L_\alpha(K)(A-\alpha I  + BK)^T + (A-\alpha I +BK)L_\alpha(K) + D_0 = 0, \label{eq:lalpha}\\ 
  P_\alpha(K)(A-\alpha I +BK) + (A-\alpha I +BK)^TP_\alpha(K) +K^TRK + Q = 0. 
\end{align}
With triangle inequality
\begin{align*}
2 \alpha \|L_\alpha(K)\| & \leq \|D_0\| + 2 \|A + BK \| \|L_\alpha(K)\| \\ 
2 \alpha \|P_\alpha(K)\| & \leq \|Q\| + 2 \|A + BK \| \|P_\alpha(K)\| + \|R\|\|K\|^2\\ 
\end{align*}
which means $\|P_\alpha(K)\|\to 0$ and $\|L_\alpha(K)\| \to 0$ as $\alpha\to \infty$. The minimum eigenvalue of $L_\alpha(K)$ can be bounded similarly: let $v$ be the unit eigenvector of $L_\alpha(K)$ corresponding to $\lambda_{\min} (L_{\alpha}(K))$, pre- and post- multiply \eqref{eq:lalpha} by $v$, we obtain 
\begin{align}
\lambda_{\min}(L_{\alpha}(K)) \geq \frac{v^T D_0 v}{2 \alpha - 2 v^T (A+BK) v} \geq \frac{\lambda_{\min}(D_0)}{2 \alpha + 2 \|A + BK\|}. \label{eq:l0below}
\end{align}
The first Hessian term $L_\alpha(K)\otimes R$ can bounded from below with \eqref{eq:l0below}
\begin{align*}
\lambda_{\min} \left(L_\alpha(K) \otimes R \right) = \lambda_{\min} (L_\alpha(K)) \lambda_{\min}(R) \geq \frac{\lambda_{\min}(D_0) \lambda_{\min}(R)}{2 \alpha + 2 \|A + BK\|}. 
\end{align*}
We bound the norm of the second and the third Hessian term $\|G_\alpha(K)\|$ as follows, where $\lesssim$ hides constants that do not depend on $\alpha$. 
\begin{align*}
\|G_\alpha(K)\| 
& \leq \|I\otimes (B^TP_\alpha(K) + RK)\| \\ 
& \cdot \|\left[I\otimes (A-\alpha I + BK) + (A-\alpha I + BK)\otimes I\right]^{-1}\|\cdot \| (I_{n,n} + P(n,n))[L_\alpha(K)\otimes B] \| \\ 
& \lesssim (-\lambda_{\max}\left(I\otimes (A-\alpha I + BK) + (A-\alpha I + BK)\otimes I\right))^{-1} \|L_\alpha(K)\| \\ 
& \lesssim (2\alpha)^{-1}\|L_\alpha(K)\|. 
\end{align*}
Comparing the two estimates above, we find the first term dominates the two following terms with large $\alpha$, uniformly over bounded $K$. The Hessian $H_\alpha(K)$ is therefore positive definite over bounded $K$ when $\alpha$ is large. The conclusion carries over to the Hessian of the decentralized controller, which is a principal sub-matrix of the Hessian of the centralized controller. 
\end{proof}

\section{The uniqueness of the continuation direction} 
This section aims to prove the following result
\begin{theorem*}
    When $n\geq 3$, for any $n$-by-$n$ real matrix $H$ that is not a multiple of $-I$, there exists a stable matrix $A$ for which  $A+H$ is unstable. 
\end{theorem*}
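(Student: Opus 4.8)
The statement is already in contrapositive form, so the plan is to construct, for a given non-scalar $H$, an explicit stable $A$ whose sum $A+H$ has an eigenvalue with nonnegative real part. The entire construction is driven by one observation: the hypothesis that $H$ is not a multiple of $-I$ (equivalently, not a scalar multiple of the identity) means $H$ fails to have every vector as an eigenvector. First I would record this as a lemma: if $Hw\in\operatorname{span}(w)$ for every $w\in\bR^n$, then every nonzero vector is an eigenvector of $H$, and testing on a pair $e_1,e_2$ together with their sum $e_1+e_2$ forces all the associated eigenvalues to coincide, i.e. $H=cI$. Hence, since $H\neq cI$, there exists a unit vector $w$ with $w$ and $Hw$ linearly independent.

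Next I would turn instability into an eigenvalue-assignment problem. Fix any $\mu\ge 0$ and set $u:=\mu w-Hw$. Because $u\in\operatorname{span}(w)$ would force $Hw\in\operatorname{span}(w)$, the vectors $w$ and $u$ are linearly independent for every choice of $\mu$. If I can produce a stable $A$ with $Aw=u$, then $(A+H)w=Aw+Hw=\mu w$, so $\mu$ is an eigenvalue of $A+H$; taking $\mu=0$ makes $A+H$ singular and taking any $\mu>0$ places an eigenvalue strictly in the open right half-plane, so in either case $A+H$ is unstable. The remaining task is therefore to realize the single linear constraint $Aw=u$ by a stable matrix.

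For the construction of $A$ I would work on the two-dimensional invariant subspace $W=\operatorname{span}(w,u)$ and a complementary subspace $W'$ with $\bR^n=W\oplus W'$, which exists since $n\ge 3>2$ (indeed the argument needs only $n\ge 2$, with $W'$ possibly trivial). On $W$, in the basis $\{w,u\}$, the requirement $Aw=u$ fixes the first column of the $2\times2$ block to $(0,1)^\top$, leaving the image of $u$ free; choosing it so that the block has negative trace and positive determinant (e.g. the block $\left(\begin{smallmatrix}0&-1\\1&-2\end{smallmatrix}\right)$) makes $A|_W$ stable. Setting $A|_{W'}=-I$ and combining, the spectrum of $A$ is the union of the block's eigenvalues and $\{-1\}$, all in the open left half-plane, so $A$ is a real stable matrix. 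This completes the construction.

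I expect the main subtlety to be bookkeeping rather than depth: verifying that the prescribed first column still leaves enough freedom to assign a stable $2\times2$ block (which it does, since only the first column is constrained), and that gluing the stable block on $W$ to $-I$ on $W'$ yields a genuinely stable $A$ via block-diagonality and similarity-invariance of the spectrum. It is worth highlighting where the hypothesis is used: non-scalar $H$ is exactly what guarantees the independence of $w$ and $Hw$, and hence of $w$ and $u$; for $H=-\alpha I$ the target $u$ collapses into $\operatorname{span}(w)$ and the construction can no longer force an unstable eigenvalue, which is precisely the degenerate case the theorem excludes.
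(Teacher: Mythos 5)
Your proof is correct, and it takes a genuinely different and considerably more elementary route than the paper's. The paper proves the stronger structural statement that the cone of ``stable directions'' $\mathcal{H}$ in \eqref{eq:define-H} equals $\{-\lambda I:\lambda\geq 0\}$, and it does so by reducing $H$ to real Jordan form and then running an extended case analysis: explicit $2\times 2$ counterexamples for each non-diagonal Jordan block, a separate rank argument (Lemma~\ref{lem:norank1}) that imports a disconnectedness example from \cite{fengExponentialNumberConnected}, and Routh--Hurwitz computations for the remaining diagonal cases. Your argument replaces all of this with a single eigenvector-assignment step: non-scalar $H$ gives a $w$ with $w,Hw$ independent, and a stable $A$ with $Aw=\mu w-Hw$ (built block-diagonally on $\operatorname{span}(w,\mu w-Hw)$ and a complement) forces $\mu\geq 0$ into the spectrum of $A+H$. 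The hypothesis is used exactly where it must be, the $2\times 2$ block $\left(\begin{smallmatrix}0&-1\\ 1&-2\end{smallmatrix}\right)$ is indeed stable with first column realizing $Aw=u$, and handling both $\mu=0$ and $\mu>0$ covers either reading of ``unstable.'' What the paper's longer route buys is the intermediate structural information (e.g.\ that members of $\mathcal{H}$ are diagonalizable with non-positive real spectrum, and the exclusion of low-rank directions) and the connection to the disconnectedness phenomena studied in the authors' earlier work; what your route buys is brevity, no case analysis, and greater generality --- it needs only $n\geq 2$, whereas the paper's rank lemma genuinely requires $n\geq 3$. Combined with the trivial observation that $H=\lambda I$ with $\lambda>0$ also destroys stability, your argument even recovers the full characterization $\mathcal{H}=\{-\lambda I:\lambda\geq 0\}$ for $n\geq 2$.
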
 

Define the set of stable directions
  \begin{align}
  \mathcal{H} = \{H: A+tH \text{ is stable whenever } A \text{ is stable and } t\geq 0\}, \label{eq:define-H}
  \end{align}
where $A$ and $H$ are $n$-by-$n$ real matrices. 

\begin{lemma} \label{lem:Hdiagonal}
  All matrices in $\mathcal{H}$ is similar to a diagonal matrix with non-positive diagonal entries. Especially, they cannot have complex eigenvalues. 
\end{lemma}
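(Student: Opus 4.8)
The plan is to reduce to a canonical form and then rule out, one at a time, the three obstructions to $H$ being similar to a non-positive diagonal matrix: an eigenvalue with positive real part, a genuinely complex eigenvalue, and a nontrivial Jordan block. The crucial preliminary observation is that $\mathcal{H}$ is invariant under real similarity: if $H\in\mathcal{H}$ and $S$ is invertible, then for any stable $A$ and $t\ge 0$ the matrix $A+tSHS^{-1}=S(S^{-1}AS+tH)S^{-1}$ is similar to $S^{-1}AS+tH$, and since $S^{-1}AS$ ranges over all stable matrices as $A$ does, the defining property of $\mathcal{H}$ forces $SHS^{-1}\in\mathcal{H}$. Hence I may assume $H$ is in real Jordan form. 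The second preliminary observation is that it suffices to destabilize a two-dimensional invariant block: if $W$ is simultaneously $H$-invariant and $A$-invariant, then the spectrum of $(A+tH)|_W=A|_W+tH|_W$ is contained in the spectrum of $A+tH$, so exhibiting an unstable $2\times 2$ restriction already contradicts $H\in\mathcal{H}$.

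With $H$ in real Jordan form I would proceed as follows. For the real parts, test against $A=-cI$ (stable for $c>0$): the eigenvalues of $A+tH=tH-cI$ are $t\lambda_i(H)-c$, and stability for all $t\ge 0$ forces $\mathrm{Re}\,\lambda_i(H)\le 0$. To exclude a genuinely complex eigenvalue $a\pm bi$ with $b\neq0$ (now $a\le0$), use the associated real two-dimensional invariant subspace, on which $H$ acts by the rotation--scaling block $H_2=\left(\begin{smallmatrix} a&-b\\ b&a\end{smallmatrix}\right)$; after a permutation similarity place it in the top-left corner and take $A=\mathrm{diag}(A_2,-cI)$ with $A_2=\left(\begin{smallmatrix}-\epsilon&\kappa\\0&-\epsilon\end{smallmatrix}\right)$ a stable shear. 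Then $A$ is stable, $\mathrm{span}(e_1,e_2)$ is $(A+tH)$-invariant, and the restriction $A_2+tH_2$ has trace $2(ta-\epsilon)<0$ while its determinant $t^2(a^2+b^2)-t(2a\epsilon+b\kappa)+\epsilon^2$ is, for small $\epsilon$ and a suitably large shear parameter $\kappa$, negative on an interval of positive $t$; a negative determinant together with a negative trace forces one eigenvalue into the right half plane, contradicting $H\in\mathcal{H}$.

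The remaining and most delicate case is a nontrivial Jordan block, i.e.\ a defect in diagonalizability. Knowing already that all eigenvalues are real and non-positive, a size-$\ge 2$ block supplies a two-dimensional invariant subspace on which $H$ acts as $H_2=\left(\begin{smallmatrix}-d&1\\0&-d\end{smallmatrix}\right)$ with $d\ge0$. Repeating the construction with a lower shear $A_2=\left(\begin{smallmatrix}-\epsilon&0\\ \mu&-\epsilon\end{smallmatrix}\right)$, the restriction $A_2+tH_2$ again has negative trace $-2(\epsilon+td)$ and determinant $(\epsilon+td)^2-t\mu$, which becomes negative for a suitable $t>0$ once $\mu$ is large (for $d=0$ any large $t$ works, and for $d>0$ the discriminant is $\mu(\mu-4\epsilon d)>0$ with both roots positive), once more yielding instability and contradicting $H\in\mathcal{H}$. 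Assembling the three exclusions shows that $H$ is diagonalizable with real, non-positive eigenvalues, i.e.\ similar to a diagonal matrix with non-positive entries. I expect the Jordan-block case to be the main obstacle, since one must pin down the explicit $2\times2$ shear that turns an \emph{asymptotically stable but defective} direction unstable; the complex-eigenvalue case is the easier rehearsal of the same determinant-sign trick, and the two reduction lemmas (similarity invariance and invariant-subspace spectral inclusion) are precisely what license the local $2\times2$ computations.
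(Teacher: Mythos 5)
Your proposal is correct and follows the same skeleton as the paper's proof: restrict to real Jordan form via similarity invariance of $\mathcal{H}$, rule out eigenvalues with positive real part, destabilize each non-diagonal $2\times 2$ block type, and lift the instability to dimension $n$ by padding $A$ with $-cI$ on the complementary invariant subspace (the paper phrases this last step through block upper-triangular structure rather than spectral inclusion on invariant subspaces, but it is the same device). Where you genuinely depart from the paper is in the $2\times 2$ case analysis. The paper treats four separate Jordan types $\left(\begin{smallmatrix} h & 1\\ 0 & h\end{smallmatrix}\right)$ with $h<0$, $\left(\begin{smallmatrix} 0 & 1\\ 0 & 0\end{smallmatrix}\right)$, $\left(\begin{smallmatrix} 0 & f\\ -f & 0\end{smallmatrix}\right)$, and $\left(\begin{smallmatrix} h & f\\ -f & h\end{smallmatrix}\right)$, the first three with bespoke numerical matrices and the last with a parametrized family $G(t)$ requiring auxiliary parameters $u,w$ and two compatibility conditions. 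You instead collapse these into two uniform ``shear against the block'' constructions, each reduced to checking the sign of a quadratic discriminant in $t$, and you replace the paper's informal ``$A+tH$ is a small perturbation of $tH$'' argument for the closed-left-half-plane claim with the cleaner test $A=-cI$. Your version is shorter and handles the paper's most laborious case (complex eigenvalue with negative real part) with essentially no extra work; the only detail left implicit is the sign bookkeeping between $b$ and $\kappa$ in the complex case, which is resolved by normalizing $b>0$ or choosing the sign of $\kappa$ accordingly, and which I would make explicit in a final write-up.
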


\begin{proof}
    When $t$ is large, $A+tH$ is a small perturbation of $tH$, hence the eigenvalues of $H$ has to be in the closed left half plane. 
  With a suitable similar transform assume $H$ is in real Jordan form. First consider the case of two by two matrices, and we denote the matrices by $H_2$ and $A_2$. Assume for contradiction that $H_2$ is not diagonalizable. The non-diagonal real Jordan form of $H_2$ has the following possibilities:
  \begin{itemize}
    \item 
    $H_2 = \begin{bmatrix} h & 1 \\ 0 & h \end{bmatrix}$, where $H_2$ has real eigenvalues $h<0$. 
    Pick $A_2 = \begin{bmatrix} 4h & -2 \\ 10h^2 & -3h \end{bmatrix}$, which is stable because $tr(A_2)=h<0$ and $\det(A_2) = 8 h^2 > 0$. We have $A_2+tH_2 = \begin{bmatrix}h t + 4h by& t - 2 \\ 10h^2 &  ht- 3h\end{bmatrix}$, whose stability criterion $
      tr (A_2+tH_2) < 0$ and $
      \det (A_2+tH_2) > 0
    $
  amounts to 
  \begin{align*}
      2h t + h < 0  \\ 
      h^2 (t^2 -9 t +8) > 0, 
  \end{align*}
  or equivalently $t\in (-1/2, 1) \cup (8, +\infty)$. Especially when $t=2$, $A_2 + tH_2$ is not stable.
    \item  
    $H_2 = \begin{bmatrix} 0 & 1 \\ 0 & 0 \end{bmatrix}$. 
    Pick a stable matrix $A = \begin{bmatrix} -1 & 0 \\ 1 & -1  \end{bmatrix}$. $A+tH$ is not stable when $t=2$. 
    \item $H_2 = \begin{bmatrix} 0 & f \\ -f & 0 \end{bmatrix}$, where $f>0$, 
    Pick $A = \begin{bmatrix} -1 & -4\\ 1 & -1 \end{bmatrix} $, $A+\frac{2}{f} H_2 = \begin{bmatrix} -1 & -2\\ -1 & -1 \end{bmatrix} $ is not stable. 
    \item $H_2 = \begin{bmatrix} h & f \\ -f & h \end{bmatrix}$, where $h<0$ and $f>0$. By rescaling assume $f=1$. Consider the following matrix function 
      \begin{align}
        G(t) = \begin{bmatrix}
          0 & \frac12 + (u+w)h  \\ -\frac12 + (u-w)h & h 
        \end{bmatrix} + t \begin{bmatrix}
          h & 1 \\ -1 & h 
        \end{bmatrix}
      \end{align}
    We have 
    \begin{align*}
      tr(G(t)) & = h+2ht \\ 
      \det(G(t)) & = (1+h^2) t^2 + (1+h^2 + 2hw) t + h^2 (w^2 - u^2)  + hw + \frac14. 
    \end{align*}
    Espeically, 
    \begin{align*}
         tr(G(-\frac12)) & = 0 \\ 
         \frac{d}{dt} tr G(t) & = 2h \\ 
         \det(G(-\frac12)) &= h^2(-\frac14 - u^2 + w^2) \\ 
         \left. \frac{d}{dt} \det G(t) \right|_{t=-\frac12} & = 2 hw \\ 
    \end{align*}
    Hence as long as 
    \begin{align}
     w > 0  \text{ and } -\frac14 - u^2 + w^2 > 0 \label{eq:H2stab1}
    \end{align}
    for small enough $\epsilon>0$, $A_2 = G(-\frac12+\epsilon)$ is a stable matrix and there will be matrices $G(t)$ with $t>-\frac12$ whose trace is negative and whose determinant is smaller. Consider the minimal value the determinant can take
    \begin{align*}
    \det G\left(-\frac12 -\frac{hw}{1+h^2}\right) & = h^2\left(-\frac14 -u^2 + \frac{h^2}{1+h^2}w^2\right)
    \end{align*}
    which means when 
    \begin{align}
    -\frac14 -u^2 + \frac{h^2}{1+h^2}w^2 < 0 \label{eq:H2stab2}
    \end{align}
    The matrix $G(t)$ with $t=-\frac12 -\frac{hw}{1+h^2}$ is unstable. There certainly exist $u$ and $w$ that satisfies \eqref{eq:H2stab1} and \eqref{eq:H2stab2}. 
   \end{itemize} 
  For general $n$, $H$'s real Jordan form is an block upper-triangular matrix
  \begin{align*}
  H = \begin{bmatrix}
    H_2 & * \\ 0 & * 
  \end{bmatrix}
  \end{align*}
  where $H_2$ can take the four possibilities mentioned above. We take the corresponding stable $A_2$ constructed above, which has the property that $A_2 + t_0 H_2$ is not stable for some $t_0>0$. Form the block diagonal matrix 
  \begin{align*}
  A = \begin{bmatrix}
    A_2 & 0 \\ 0 & -I 
  \end{bmatrix}
  \end{align*}
  Then $A$ is stable, while $A+t_0H = \begin{bmatrix}
    A_2 + t_0H_2 & * \\ 0 & * 
  \end{bmatrix}$ is not stable. 
\end{proof}

We can strengthen the argument above and further characterize $\mathcal{H}$ in the case $n\geq 3$. 
\begin{lemma} \label{lem:norank1}
  When $n\geq 3$, the set of stable directions $\mathcal{H}$ does not contain any matrices of rank $1$, $2$, \ldots, $n-2$. 
\end{lemma}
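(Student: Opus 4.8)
The plan is to argue by contradiction and reduce the entire statement to a single $3\times 3$ phenomenon: a stable matrix that can be destabilized by adding damping along one coordinate only. First I would record that $\mathcal{H}$ is invariant under similarity. Indeed, for invertible $S$ we have $A + tH$ stable iff $S(A+tH)S^{-1} = (SAS^{-1}) + t(SHS^{-1})$ is stable, and $A\mapsto SAS^{-1}$ is a bijection of the set of stable matrices; hence $H\in\mathcal{H}$ iff $SHS^{-1}\in\mathcal{H}$. By Lemma~\ref{lem:Hdiagonal} any $H\in\mathcal{H}$ is similar to a diagonal matrix with non-positive entries, so if such an $H$ had rank $r$ with $1\le r\le n-2$ I may assume $H$ is itself diagonal and non-positive. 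Its rank lying in $[1,n-2]$ forces at least one strictly negative eigenvalue $-\lambda$ with $\lambda>0$ and at least two zero eigenvalues; after a permutation similarity I arrange the first three diagonal entries to be $(-\lambda,0,0)$ with all remaining entries non-positive.

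Next I would build a block-diagonal witness $A = \mathrm{diag}(A_3,\,-I_{n-3})$, where $A_3$ is a fixed stable $3\times 3$ matrix to be chosen. Since $A + t_0 H$ is block diagonal, its stability decouples. The lower block is $-I_{n-3} + t_0\,\mathrm{diag}(\text{non-positive})$, whose diagonal entries are $\le -1 < 0$, hence stable for every $t_0\ge 0$. The upper block is $A_3 + t_0\,\mathrm{diag}(-\lambda,0,0) = A_3 - (t_0\lambda)\,e_1 e_1^\top$, i.e. $A_3$ with its $(1,1)$ entry decreased. Thus the whole problem reduces to producing one stable $A_3$ that becomes unstable once the $(1,1)$ entry is lowered far enough.

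The concrete construction I have in mind is
\[
A_3 = \begin{bmatrix} -2 & 1 & 1 \\ -2 & 0 & 1 \\ -1 & 1 & 0 \end{bmatrix},
\]
whose characteristic polynomial is $s^3 + 2s^2 + 2s + 1$, Hurwitz by the Routh–Hurwitz test ($2\cdot 2 > 1$). The lower-right principal $2\times 2$ minor of $A_3$ equals $-1<0$, so the determinant of $A_3 - s\,e_1 e_1^\top$, being affine in $s$, equals $s-1$; equivalently the constant coefficient of its characteristic polynomial is $1-s$. Once $s>1$ this coefficient is negative and the matrix fails to be Hurwitz. Choosing $t_0 = 2/\lambda$ gives $t_0\lambda = 2 > 1$, so $A + t_0 H$ is unstable while $A$ is stable, contradicting $H\in\mathcal{H}$ and completing the argument.

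I expect the main obstacle to be the $3\times 3$ construction itself: one must locate a stable $A_3$ whose lower-right principal $2\times 2$ minor is negative — so that lowering the $(1,1)$ entry drives the determinant across zero — while simultaneously all Routh–Hurwitz inequalities hold strictly with room to spare at $s=0$. This is precisely a damping-induced instability, where damping confined to a strict subset of coordinates destabilizes an otherwise stable coupled system. Once such an $A_3$ is pinned down and verified, the lift to dimension $n$ through the block-diagonal witness, together with the reductions by similarity and permutation, is routine.
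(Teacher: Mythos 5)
Your proof is correct, and its overall skeleton matches the paper's: invoke Lemma~\ref{lem:Hdiagonal} (plus similarity invariance of $\mathcal{H}$) to reduce to a diagonal non-positive $H$, use the rank hypothesis $1\le\operatorname{rank}H\le n-2$ to extract a leading $3\times 3$ block $\diag(-\lambda,0,0)$, exhibit a stable $3\times 3$ matrix destabilized by that direction, and lift to dimension $n$ with the block-diagonal witness $\diag(A_3,-I)$. Where you genuinely diverge is in the key $3\times 3$ construction. The paper imports a known example from \cite{fengExponentialNumberConnected} whose set of stabilizing gains $T$ is disconnected, conjugates the rank-one perturbation into $\diag(-0.2,0,0)$, and takes $A_3=G(t_1)$ at a point of the first component with $t_0=t_2-t_1$ reaching the unstable gap. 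You instead give an explicit matrix $A_3$ with characteristic polynomial $s^3+2s^2+2s+1$ (Hurwitz since $2\cdot 2>1$) and observe that $\det(A_3-s\,e_1e_1^\top)=s-1$ because the $(1,1)$ cofactor of $A_3$ equals $-1$; since a stable real $3\times 3$ matrix must have negative determinant, $s>1$ forces instability. I checked the arithmetic: $\tr A_3=-2$, the sum of principal $2\times 2$ minors is $2$, $\det A_3=-1$, and the lower-right minor is $-1$, so both the Hurwitz test and the affine determinant formula hold. Your route is more elementary and self-contained --- it needs only a sign flip of one coefficient rather than the disconnectedness of a stability region --- while the paper's route leans on an already-established example and thereby ties the lemma to the disconnectedness phenomenon that motivates the whole paper. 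Both are valid; yours would make the appendix independent of the cited construction.
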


\begin{proof} 
  From lemma \ref{lem:Hdiagonal}, we only need to consider the case where $H$ is diagonal with negative diagonal entries. Assume there is a rank one matrix $H \in \mathcal{H}$, write \begin{align*}
  H = \begin{bmatrix}
    H_3 & 0 \\ 0 & *
  \end{bmatrix}, 
  \end{align*}
  where $H_3 = \diag(-1, 0, 0)$. This is possible with the rank assumption. We will construct a stable $3$-by-$3$ matrix $A_3$, such that there is some $t_0>0$ that makes $A_3+t_0H_3$ unstable, and then carry the instability to $A + t_0 H$ with the extended matrix \[A=\begin{bmatrix}
    A_3 & 0 \\ 0 & -I
  \end{bmatrix}. \]
  From \cite{fengExponentialNumberConnected}, the set 
  \begin{align*}
     T = \left\{t: \begin{bmatrix}
      0 &1 &0 \\ 0 & 0 & 1 \\ 5 & 1 & -1 
    \end{bmatrix} + t 
    \begin{bmatrix}
  0 \\ 0 \\ -1
  \end{bmatrix}
  \begin{bmatrix}
      0.85 & 0.2 &0.2
  \end{bmatrix} \text{ is stable }\right\}
  \end{align*}
  has two disconnected components. Consider the Jordan decomposition of the matrix 
  \begin{align*}
   \begin{bmatrix}
  0 \\ 0 \\ -1
  \end{bmatrix}
  \begin{bmatrix}
      0.85 & 0.2 &0.2
  \end{bmatrix} = P \diag (-0.2, 0, 0) P^{-1}, 
   \end{align*}
   where $P$ is some invertible matrix. Write 
   \begin{align*}
    G(t) = 5 P^{-1} \begin{bmatrix}
      0 &1 &0 \\ 0 & 0 & 1 \\ 5 & 1 & -1 
    \end{bmatrix}P  + t \times \diag(-1, 0, 0). 
   \end{align*}
   After this similar transform, the set $T$ can be written with $G(t,0)$. 
   \begin{align*}
   T = \{t: G(t) \text{ is stable}\}
   \end{align*}
   Since $T$ is disconnected there exists some $t_1 < t_2$ such that $G(t_1)$ is stable, while $G(t_2)$ is unstable with some eigenvalue in the right half plane. Setting $A_3 = G(t_1)$ and $t_0 = t_2 - t_1$ completes the proof. 
\end{proof}

Since we can perturb the direction and make $H$ full-rank, the fact that $H$ has rank one is not the substantial property. This is indeed the case. 

\begin{lemma}
When $n\geq 3$, $\mathcal{H} = \{-\lambda I , \lambda \geq 0 \}$. 
\end{lemma}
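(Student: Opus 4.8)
The plan is to prove the two inclusions of $\mathcal{H}=\{-\lambda I:\lambda\ge 0\}$ separately. The inclusion $\{-\lambda I:\lambda\ge 0\}\subseteq\mathcal{H}$ is just the damping property at the matrix level: if $A$ is stable and $\lambda,t\ge 0$, then every eigenvalue of $A+t(-\lambda I)=A-t\lambda I$ is an eigenvalue of $A$ with the nonnegative number $t\lambda$ subtracted from its real part, so $A-t\lambda I$ stays stable. The substance is the reverse inclusion, and for it I would first normalize $H$ using the earlier lemmas and then exhibit an explicit destabilizing pair supported on a $2\times 2$ corner.

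First I would record two invariance properties of $\mathcal{H}$ that follow at once from the definition \eqref{eq:define-H}. It is invariant under similarity: if $H\in\mathcal{H}$ and $P$ is invertible, then for stable $A$ one writes $A+tPHP^{-1}=P(P^{-1}AP+tH)P^{-1}$ and uses that $P^{-1}AP$ is stable and that similarity preserves stability. In particular it is invariant under coordinate permutations. By Lemma~\ref{lem:Hdiagonal}, any $H\in\mathcal{H}$ is similar to $\diag(d_1,\dots,d_n)$ with $d_i\le 0$, so by similarity invariance I may assume $H=\diag(d_1,\dots,d_n)$, $d_i\le 0$. Suppose, for contradiction, that the $d_i$ are not all equal; permuting coordinates, assume $d_1>d_2$ with $d_1,d_2\le 0$.

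The crux is to build a stable $2\times 2$ matrix $A_2=\left[\begin{smallmatrix}a&b\\ c&e\end{smallmatrix}\right]$ and a $t_0>0$ for which $A_2+t_0\diag(d_1,d_2)$ is unstable. The trace of $A_2+t\diag(d_1,d_2)$ equals $a+e+t(d_1+d_2)$, which stays negative for $t\ge 0$, so the instability must be forced through the determinant
\[
\det\!\bigl(A_2+t\diag(d_1,d_2)\bigr)=d_1d_2\,t^{2}+(ad_2+ed_1)\,t+(ae-bc),
\]
a polynomial equal to $\det A_2=ae-bc>0$ at $t=0$. If $d_1=0$ (hence $d_2<0$) this is linear with slope $ad_2$; picking $A_2$ stable with $a>0$ makes the slope negative and forces a sign change at some $t_0>0$. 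If $d_1<0$ the leading coefficient $d_1d_2>0$ gives an upward parabola, and I would choose $a,e$ with $a+e<0$ but $ad_2+ed_1<0$ (placing the vertex at positive $t$), which is possible precisely because $d_1\ne d_2$, and then let $bc\uparrow ae$ so that $\det A_2\to 0^{+}$; the minimum value of the parabola then tends to $-(ad_2+ed_1)^2/(4d_1d_2)<0$, so the determinant dips strictly below zero at some $t_0>0$. In either case $A_2+t_0\diag(d_1,d_2)$ has a positive determinant at $t=0$ but a negative one at $t_0$, hence a real eigenvalue in the open right half plane, while $A_2$ is stable.

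Finally I would lift this to dimension $n$ exactly as in the proof of Lemma~\ref{lem:norank1}, padding with $-I$. Set $A=\left[\begin{smallmatrix}A_2&0\\ 0&-I_{n-2}\end{smallmatrix}\right]$, which is stable. Since $H$ is diagonal, $A+t_0H$ is block diagonal with upper block $A_2+t_0\diag(d_1,d_2)$ (unstable) and lower block $-I_{n-2}+t_0\diag(d_3,\dots,d_n)$, whose diagonal entries $-1+t_0 d_\ell\le -1$ are all negative; thus $A+t_0H$ is unstable although $A$ is stable, contradicting $H\in\mathcal{H}$. Hence all $d_i$ coincide and $H=dI=-\lambda I$ with $\lambda=-d\ge 0$, giving $\mathcal{H}\subseteq\{-\lambda I:\lambda\ge 0\}$. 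The main obstacle is the $2\times 2$ determinant analysis in the both-negative case: stability of $A_2$ requires $\det A_2>0$, yet the determinant must later go negative, and the device of driving $\det A_2\to 0^{+}$ is what reconciles these. This same $2\times 2$ seed covers the rank-$(n-1)$ and non-scalar full-rank directions that Lemma~\ref{lem:norank1} (which needs at least two zero eigenvalues) leaves open, so no further case analysis is required.
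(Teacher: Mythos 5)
Your proof is correct, and it takes a genuinely different and more economical route than the paper's. Both arguments begin the same way: Lemma~\ref{lem:Hdiagonal} plus similarity invariance of $\mathcal{H}$ reduce to $H=\diag(d_1,\dots,d_n)$ with $d_i\le 0$, and both finish by padding a low-dimensional counterexample with $-I$. The paper then splits according to the rank of a $3\times 3$ diagonal block: the rank-one case is settled in Lemma~\ref{lem:norank1} by importing a disconnectedness example from \cite{fengExponentialNumberConnected}, and the cases $\diag(-1,h_2,0)$ and $\diag(-1,h_2,h_3)$ are handled by explicit $3\times 3$ matrices whose cubic characteristic polynomials are analyzed via Routh--Hurwitz, with fairly delicate parameter tuning (conditions \eqref{eq:H3rd-t-cubic}, \eqref{eq:h3rd-condition}, etc.). You instead observe that any non-scalar diagonal $H$ has two distinct entries $d_1>d_2$ and that a $2\times 2$ seed suffices: the trace of $A_2+t\diag(d_1,d_2)$ stays negative for $t\ge0$, so instability can only enter through the determinant, which is an explicit quadratic (affine if $d_1=0$) in $t$; arranging $ad_2+ed_1<0$ --- possible exactly when $d_1\ne d_2$, since for $d_1=d_2=d$ this coefficient equals $d(a+e)>0$ --- and letting $\det A_2\to 0^{+}$ drives the determinant below zero at some $t_0>0$. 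I checked the two sub-cases and the padding step; they are sound (in particular $a>0$ is compatible with $2\times2$ stability, and the required sign pattern on $a,e,b,c$ is achievable). This single $2\times 2$ computation subsumes every rank at once, so for this theorem Lemma~\ref{lem:norank1} and the external disconnectedness example become unnecessary, and the mechanism singling out $-\lambda I$ is laid bare. A side benefit worth noting: your argument needs $n\ge 3$ nowhere, so combined with Lemma~\ref{lem:Hdiagonal} it yields the same characterization of $\mathcal{H}$ already for $n=2$, whereas the paper's rank-based route genuinely requires $n\ge 3$ in Lemma~\ref{lem:norank1}.
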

  \begin{proof}
From lemma~\ref{lem:Hdiagonal}, we only need to consider the case where $H$ is diagonal with negative diagonal entries. Write \begin{align*}
  H = \begin{bmatrix}
    H_3 & 0 \\ 0 & *
  \end{bmatrix}, 
  \end{align*}
  where $H_3 = \diag(h_1, h_2, h_3)$. The diagonal entries $h_i, i=1,2,3$ are non-positive and not all equal. We will construct an $A_3$ and a corresponding $t_0$ such that $A_3$ is stable while $A_3+t_0H_3$ is not stable, and extend to the general $A$ as in Lemma~\ref{lem:norank1}. The case where $H_3$ has rank $1$ has been considered in Lemma~\ref{lem:norank1}. We show the remaining rank is impossible. Without loss of generality we rescale $H_3$ and assume $h_1=-1$. 
  \begin{itemize}
    \item $H_3 = \diag(-1, h_2, 0)$, where $h_2 <0$.  Consider the matrix function 
    \begin{align*}
     G(t)= \begin{bmatrix}
      0 & -1 & 0 \\
      0 & 0 & -h_2 \\ 
      2 & 1 & 0 
    \end{bmatrix} + t H_3 = \begin{bmatrix}
      -t & -1 & 0 \\
      0 & t h_2 & -h_2 \\ 
      2 & 1 & 0 
    \end{bmatrix}. 
    \end{align*}
    The characteristic polynomial of $G(t)$ is \[\phi_{G(t)}(x) = x^3 + (t-th_2) x^2 + (h_2 - t^2 h_2) x + (t-2)h_2. \] The Routh-Hurwitz Criterion insists 
    \begin{align*}
      t(1-h_2) & > 0 \\ 
      (t-2)h_2 & > 0 \\ 
      t(1-h_2)  h_2(1-t^2) & >(t-2)h_2. 
    \end{align*}
    which is simplified with $h_2 < 0$ to 
    \begin{align}
    & 0 < t < 2   \label{eq:H3rd-t-range}\\ 
    & (1-h_2) t^3 + th_2 -2 > 0 \label{eq:H3rd-t-cubic}. 
    \end{align}
    Especially, when $t=\frac32$, \eqref{eq:H3rd-t-cubic} simplifies to the obvious expression $\frac18(11-15h_2)>0$. when $t=3$, \eqref{eq:H3rd-t-range} implies $G(t)$ is not stable. Setting $A_3=G(\frac32)$ and $t_0 = \frac32$ concludes the proof. 

    \item $H_3 = \diag(-1, h_2, h_3)$, where without loss of generally we assume 
    \begin{equation}
      -1 \leq h_2, h_3<0, \text{ and one of them is not $-1$}.  \label{eq:h3rd-sign-condition}
    \end{equation}
    
    Consider the matrix  
    \begin{align*}
    G(t)= \begin{bmatrix}
      0 & -1 & 0 \\
      0 & 0 & h_2 \\ 
      ah_3 & h_3 & 0 
    \end{bmatrix} + t H_3 = \begin{bmatrix}
      -t & -1 & 0 \\
      0 & t h_2 & h_2 \\ 
      ah_3 & h_3 & th_3
    \end{bmatrix}
    \end{align*}
    Its Routh-Hurwitz Criterion insists 
    \begin{align}
    t  & > 0 \nonumber \\ 
    f_1(t) = a - t + t^3 &  > 0 \label{eq:cubic1}\\ 
    f_2(t) = -ah_2 h_3 + th_2 h_3(h_2 + h_3) + t^3 (1-h_2)(1-h_3)(-h_2-h_3)& > 0 \label{eq:cubic2}
    \end{align}
    We claim that when 
    \begin{align}
    \sqrt\frac{h_2 h_3(h_2 + h_3)^2}{(-h_2 - h_3 + h_2h_3)^3} < a < \sqrt\frac{4}{27} \label{eq:h3rd-condition}
    \end{align}
    the set of $t$ that satisfy Routh-Hurwitz Criterion is disconnected. To see this, write the positive local minimum of $f_1(t)$ in \eqref{eq:cubic1} as $t_1 = \sqrt{\frac13}$, and write the positive local minimum of $f_2(t)$ in \eqref{eq:cubic2} as $t_2 = \sqrt{\frac{h_2h_3}{3(1-h_1)(1-h_2)}}$. The condition \eqref{eq:h3rd-sign-condition} ensures that $t_1 < t_2$ and the condition \eqref{eq:h3rd-condition} ensures that $f_1(t_1)$ and $f_2(t_2)$ are negative. Furthermore, consider $t_0 = a \frac{h_2 + h_3 - h_2h_3}{h_2 + h_3}$, which is the root of $(1-h_2)(1-h_3)(-h_2-h_3) f_1(t)-f_2(t)$. It holds that $t_1 < t_0 < t_2$ and both $f_1(t_0)$ and $f_2(t_0)$ are positive, which implies that the positive intersection $f_1(t)$ and $f_2(t)$ are positive. 
    We conclude that when $t=t_0$, the matrix $G(t_0)$ is stable, and when $t$ is large, $G(t)$ is again stable. Yet when $t=t_2 \in (t_0, \infty)$, the matrix $G(t_2)$ is not stable. 
  \end{itemize}
\end{proof}

\end{document}